\RequirePackage{fix-cm}

\documentclass[smallextended]{svjour3}       %
\smartqed  
\usepackage{csquotes}
\usepackage{graphicx}
\usepackage{amsfonts}
\usepackage{algorithm}
\usepackage{amsmath}
\usepackage{geometry}
\usepackage{color}
\usepackage{amssymb}

\usepackage{algpseudocode}

\usepackage[hidelinks]{hyperref}
\usepackage{cite}

\begin{document}

\title{Polynomial interpolation--regression on the sphere}

\titlerunning{Polynomial approximation on the sphere}        

\author{Francesco Dell'Accio \and 
        Federico Nudo \and 
         Teresa E. Pérez \and 
         Miguel A. Piñar
}

\institute{Francesco Dell'Accio \at
             Department of Mathematics and Computer Science, University of Calabria, Rende (CS), Italy\\ 
             \email{francesco.dellaccio@unical.it} 
         \and 
            Federico Nudo (corresponding author) \at
              Department of Mathematics and Computer Science, University of Calabria, Rende (CS), Italy\\
              \email{federico.nudo@unical.it}
                \and 
              Teresa E. Pérez \at
              Institute of Mathematics IMAG and Department of Applied Mathematics, University of Granada, Granada, Spain \\
              \email{tperez@ugr.es}
              \and 
              Miguel A. Piñar \at
              Department of Applied Mathematics, University of Granada, Granada, Spain \\
              \email{mpinar@ugr.es}
}

\date{Received: date / Accepted: date}

\maketitle

\begin{abstract}
We introduce an interpolation--regression operator for polynomial approximation on the unit sphere $\mathbb{S}^2$ from discrete samples. The approximant is a spherical polynomial of degree $r$ which interpolates the data on a prescribed subset of nodes and uses the remaining sampling nodes to minimize the residual in a least squares sense. Under natural rank assumptions on the associated Vandermonde matrices, the approximant is unique and is characterized by an orthogonality condition with respect to the discrete inner product on the sampling set. We then focus on the case in which the sampling and interpolation nodes are antipodally symmetric. In this setting, when the polynomial is expressed in real spherical harmonics, the constrained problem can be decomposed into independent even and odd components. In the same framework, we prove equivariance under the antipodal map and, more generally, under orthogonal transformations preserving the node sets. We also consider spherical designs. In this case, the normal matrix is a scalar matrix. Consequently,  the spectral condition number of the associated KKT matrix can be written explicitly. Numerical experiments in both antipodal and non-antipodal settings illustrate the effectiveness of the proposed method.

\keywords{Polynomial approximation on the sphere \and Constrained least squares \and Spherical harmonics \and Antipodal symmetry \and Spectral analysis}
\subclass{65D05}
\end{abstract}

\section{Introduction}
The reconstruction of functions on the unit sphere $\mathbb{S}^2$ is a classical problem in applied and computational mathematics~\cite{BSI:MWR:1996,CAO:OUP:1998,SAA:ACM:2001,FIF:TRA:2021}. Problems of this type arise in geophysics, meteorology, climate modeling, astrophysics, and computer graphics, where data are naturally sampled on spherical geometries. In many such situations, the available information is reduced to a finite set of samples. This has led to the development of several approximation techniques; see, e.g.,~\cite{CPA:JAPP:2000,IBP:CONAPP:2001,SAA:ACM:2001,PIO:SINUM:2003,PIO:ACOM:2004,PSO:DEC:2012,SFM:ACM:2015,IOS:JOMP:2025}. Among these methods, polynomial approximation is particularly attractive for its simple algebraic structure and for the efficient evaluation of derivatives, integrals, and spherical differential operators. Polynomial interpolation on $\mathbb{S}^2$ is uniquely solvable only for suitable node configurations. These configurations, usually called \emph{unisolvent} sets, consist of $(n+1)^2$ points for which the associated Vandermonde matrix is nonsingular. Explicit constructions of such sets have been introduced in~\cite{PIO:SINUM:2003,PIO:ACOM:2004}. However, in many practical applications, the sampling points are fixed a priori and do not exhibit any special geometric structure. In particular, they are typically not unisolvent, so that pure interpolation may become either ill-posed or numerically unstable. In this situation, least squares approximation provides a flexible alternative for arbitrary sampling sets. However, used alone, it does not exploit the local accuracy that interpolation at well chosen nodes may provide. The aim of this work is to introduce an interpolation--regression operator for polynomial approximation on the sphere. The approximant interpolates the data on a prescribed subset of nodes and uses the remaining sampling nodes to minimize the residual in a least squares sense. Once a basis of the polynomial space is fixed, the coefficients of the interpolation--regression polynomial are obtained from a constrained least squares problem, which is written as a Karush--Kuhn--Tucker (KKT) linear system. Our construction is related to the constrained \emph{mock-Chebyshev least squares} approach introduced in the one dimensional setting in~\cite{OTC:JCAM:2015}. Starting from a grid of $n+1$ equispaced nodes, that method selects a subset of $m=O(\sqrt{n})$ nodes approximating a Chebyshev distribution and computes a polynomial that interpolates the data on this subset while minimizing the residual over the full sampling set in a least squares sense. This strategy has subsequently been extended to several multidimensional settings, including tensor product constructions on rectangular domains~\cite{GOT:AML:2022}, as well as related approaches on triangular, circle, and disk domains~\cite{AMI:DRNA:2024,ANI:ACOM:2025,cruz2026mixed}. Recently, this method has also been applied to the histopolation setting~\cite{bruni2025polynomial}. Motivated by these developments, we extend this approach to the sphere.

The paper is organized as follows.  In Section~\ref{sec:interp_sphere}, we introduce the interpolation--regression problem on $\mathbb{S}^2$ and study its main algebraic properties. In particular, we prove the nonsingularity of the associated KKT system under natural rank assumptions, determine its inertia, and characterize the approximant through a discrete orthogonality condition. We also analyze the role of antipodal symmetry, showing that, when the polynomial is expressed in real spherical harmonics, the problem can be decomposed into even and odd components. Section~\ref{sec:designs} is devoted to spherical designs. In this case, the spectral condition number of the associated KKT matrix can be characterized explicitly. In Section~\ref{secQuasiOp}, we establish a quasi-optimality result under a discrete norming assumption on the sampling set. In Section~\ref{sec:algorithm}, we present a constructive strategy for generating admissible interpolation subsets in the antipodally symmetric setting. Finally, in Section~\ref{sec:numtest} we report numerical experiments in both antipodal and non-antipodal configurations.

\section{Interpolation--regression on the sphere}
\label{sec:interp_sphere}

Let
\[
X_N=\left\{ \boldsymbol{x}_1,\dots,\boldsymbol{x}_N \right\}\subset \mathbb{S}^2
\]
be a set of sampling nodes, and assume that the values of a function
\[
f:\mathbb{S}^2\to\mathbb{R}
\]
are known at the nodes of $X_N$. Although the construction works for sampling sets of arbitrary size, for simplicity we assume that $N=(n+1)^2$ for some $n\in\mathbb{N}$. Let $r<n$ and denote by $\Pi_r\left( \mathbb{S}^2 \right)$
the space of spherical polynomials of degree at most $r$, with
\[
R:=(r+1)^2=\dim\left( \Pi_r\left( \mathbb{S}^2 \right) \right).
\]
Given a basis 
\begin{equation}\label{basis}
\mathcal{B}_r=\left\{b_1,\dots,b_R\right\}  
\end{equation}
of $\Pi_r\left( \mathbb{S}^2 \right)$, we consider the associated Vandermonde matrix
\begin{equation}\label{vander}
    V_{N,r}=
\left[
b_j\left( \boldsymbol{x}_i \right)
\right]_{\substack{i=1,\dots,N \\ j=1,\dots,R}}
\in\mathbb{R}^{N\times R},
\end{equation}
and assume that
\begin{equation}\label{eq:rank_condition_section}
\operatorname{rank}\left( V_{N,r} \right)=R.
\end{equation}
A convenient choice for $\mathcal{B}_r$ is given by the real spherical harmonics. In spherical coordinates $(\varphi,\theta)\in[0,\pi]\times[0,2\pi)$, they are defined by
\begin{equation}\label{real_armsfbas}
    \mathcal{B}_r=
\left\{
u_{\ell,m}(\varphi,\theta)\,:\,
0\le \ell\le r,\quad -\ell\le m\le \ell
\right\},
\end{equation}
where
\begin{equation}\label{real_armsf}
    u_{\ell,m}(\varphi,\theta)=
\begin{cases}
\sqrt{2}N_{\ell,m}P_\ell^m(\cos\varphi)\cos(m\theta),
& 1\le m\le \ell,\\[6pt]
N_{\ell,0}P_\ell^0(\cos\varphi),
& m=0,\\[6pt]
\sqrt{2}N_{\ell,|m|}P_\ell^{|m|}(\cos\varphi)\sin(|m|\theta),
& -\ell\le m\le -1.
\end{cases}
\end{equation}
Here $P_\ell^m$ denotes the associated Legendre polynomial~\cite{Abramowitz:1948:HOM} and
\[
N_{\ell,m}=
\sqrt{\frac{2\ell+1}{4\pi}\frac{(\ell-m)!}{(\ell+m)!}}.
\]
With this normalization, the basis $\mathcal{B}_r$ is orthonormal in $L^2\left(\mathbb{S}^2\right)$ with respect to the measure 
\[
d\omega = \sin(\varphi) d\varphi d\theta,
\]
that is
\begin{equation}\label{ortcond1}
   \langle u_{\ell,m},u_{\ell',m'}\rangle_{L^2\left(\mathbb{S}^2\right)}=\int_{\mathbb{S}^2}
u_{\ell,m}(\boldsymbol{x})u_{\ell',m'}(\boldsymbol{x})d\omega(\boldsymbol{x})
=
\delta_{\ell\ell'}\delta_{mm'}, \quad \boldsymbol{x}=(\varphi,\theta).
\end{equation}
We now select a subset of nodes on which interpolation can be imposed. Let $m<r$ and define
\[
M:=(m+1)^2=\dim\left( \Pi_m\left( \mathbb{S}^2 \right) \right).
\]
We select a subset
\[
X_M=\left\{ \boldsymbol{x}_{1}^{\prime},\dots,\boldsymbol{x}_{M}^{\prime} \right\}\subset X_N,
\]
and denote by $V_{M,r}\in\mathbb{R}^{M\times R}$ the matrix obtained by restricting $V_{N,r}$ to the rows corresponding to $X_M$. We assume that
\begin{equation}\label{rankM}
    \operatorname{rank}\left(V_{M,r}\right)=M.
\end{equation}
Although the construction does not require $M$ to be of the form $(m+1)^2$, we adopt this convention throughout the paper. Indeed, this value of $M$ coincides with the number of degrees of freedom of $\Pi_m\left(\mathbb{S}^2\right)$ and therefore matches the number of conditions required for polynomial interpolation. This choice is used in Section~\ref{secQuasiOp} to establish convergence results under stronger interpolation assumptions.

We seek a polynomial $\hat{p}_r[f]\in \Pi_r\left( \mathbb{S}^2 \right)$ that satisfies the interpolation conditions
\[
\hat{p}_r[f]\left(\boldsymbol{x}_{j}^{\prime}\right)=f\left(\boldsymbol{x}_{j}^{\prime}\right),
\qquad j=1,\dots,M,
\]
while approximating the data over $X_N\setminus X_M$ in the least squares sense. This leads to the following constrained approximation problem
\begin{equation}\label{eq:constrained_problem_sphere}
\min_{p_r\in \Pi_r\left( \mathbb{S}^2 \right)}
\sum_{i=1}^N
\left|
p_r(\boldsymbol{x}_i)-f(\boldsymbol{x}_i)
\right|^2
\quad
\text{subject to}
\quad
p_r\left(\boldsymbol{x}_{j}^{\prime}\right)=f\left(\boldsymbol{x}_{j}^{\prime}\right),
\quad j=1,\dots,M.
\end{equation}
We express this polynomial in the basis~\eqref{basis}, namely
\begin{equation}\label{eq:cls_problem}
    \hat{p}_r[f]=\sum_{j=1}^R c_j b_j,
\end{equation}
with the unknowns
\[\boldsymbol{c}=\left[c_1,\dots,c_R\right]^{\top}\in\mathbb{R}^R.\]
Introducing the data vectors
\[
\boldsymbol{f}_N=\left[f\left(\boldsymbol{x}_1\right),\dots,f\left(\boldsymbol{x}_N\right)\right]^\top,
\quad
\boldsymbol{f}_M=\left[f\left(\boldsymbol{x}_{1}^{\prime}\right),\dots,f\left(\boldsymbol{x}_{M}^{\prime}\right)\right]^\top,
\]
problem~\eqref{eq:constrained_problem_sphere} can be written in the matrix form as
\begin{equation}\label{eq:matrix_problem_sphere}
\min_{\boldsymbol{c}\in\mathbb{R}^R}
\left\|V_{N,r}\boldsymbol{c}-\boldsymbol{f}_N\right\|_2^2
\quad
\text{subject to}
\quad
V_{M,r}\boldsymbol{c}=\boldsymbol{f}_M.
\end{equation}
These conditions lead to the KKT system
\begin{equation}\label{eq:kkt_sphere}
\begin{bmatrix}
V_{N,r}^\top V_{N,r} & V_{M,r}^\top \\
V_{M,r} & 0
\end{bmatrix}
\begin{bmatrix}
\boldsymbol{c} \\ \boldsymbol{\gamma}
\end{bmatrix}
=
\begin{bmatrix}
V_{N,r}^\top \boldsymbol{f}_N \\
\boldsymbol{f}_M
\end{bmatrix},
\end{equation}
where $\boldsymbol{\gamma}\in\mathbb{R}^M$ is the vector of Lagrange multipliers. The next result establishes solvability of the KKT linear system and describes its inertia.

\begin{theorem}\label{thm1}
Let $X_N \subset \mathbb{S}^2$ be a set of sampling nodes, and let $X_M \subset X_N$ be a subset of interpolation nodes. Let
\[
V_{N,r}\in\mathbb{R}^{N\times R},
\qquad
V_{M,r}\in\mathbb{R}^{M\times R},
\]
be the corresponding Vandermonde matrices. Assume that
\begin{itemize}
    \item[(i)] $\operatorname{rank}\left( V_{N,r} \right)=R$,
    \item[(ii)] $\operatorname{rank}\left( V_{M,r} \right)=M$.
\end{itemize}
Then the KKT matrix
\begin{equation}
\label{eq:KKT_matrix}
\mathcal{K}=
\begin{bmatrix}
V_{N,r}^\top V_{N,r} & V_{M,r}^\top \\[4pt]
V_{M,r} & \boldsymbol{0}
\end{bmatrix}
\in\mathbb{R}^{(R+M)\times(R+M)}
\end{equation}
is nonsingular. Moreover, $\mathcal{K}$ has exactly $R$ positive eigenvalues and $M$ negative eigenvalues.
\end{theorem}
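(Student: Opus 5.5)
Write $A:=V_{N,r}^\top V_{N,r}\in\mathbb{R}^{R\times R}$ and $B:=V_{M,r}\in\mathbb{R}^{M\times R}$, so that
\[
\mathcal{K}=\begin{bmatrix} A & B^\top\\ B & \boldsymbol{0}\end{bmatrix}.
\]
The plan rests on two facts about these blocks. First, assumption (i) makes $A$ symmetric positive definite: $\boldsymbol{c}^\top A\boldsymbol{c}=\|V_{N,r}\boldsymbol{c}\|_2^2$, and this vanishes only when $\boldsymbol{c}=\boldsymbol{0}$ because $V_{N,r}$ has full column rank. Second, assumption (ii) says $B$ has full row rank $M$ (and indeed $M<R$ since $m<r$). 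Given these, I would take the Schur complement route, since it yields nonsingularity and inertia together.

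\textbf{Step 1: block factorization.} Since $A$ is invertible, the congruence
\[
\mathcal{K}=\begin{bmatrix} I_R & \boldsymbol{0}\\ BA^{-1} & I_M\end{bmatrix}
\begin{bmatrix} A & \boldsymbol{0}\\ \boldsymbol{0} & -BA^{-1}B^\top\end{bmatrix}
\begin{bmatrix} I_R & A^{-1}B^\top\\ \boldsymbol{0} & I_M\end{bmatrix}
\]
holds, where the outer factors are transposes of each other and are unit triangular, hence nonsingular. This is a direct multiplication check.

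\textbf{Step 2: the Schur complement.} Let $S:=BA^{-1}B^\top\in\mathbb{R}^{M\times M}$. It is symmetric, and for $\boldsymbol{y}\in\mathbb{R}^M$ we have $\boldsymbol{y}^\top S\boldsymbol{y}=(B^\top\boldsymbol{y})^\top A^{-1}(B^\top\boldsymbol{y})\ge 0$. Equality forces $B^\top\boldsymbol{y}=\boldsymbol{0}$, and full row rank of $B$ then gives $\boldsymbol{y}=\boldsymbol{0}$. So $S$ is positive definite, and $-S$ is negative definite.

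\textbf{Step 3: conclusion.} From the factorization, $\det\mathcal{K}=\det A\cdot\det(-S)\neq 0$, so $\mathcal{K}$ is nonsingular. Sylvester's law of inertia says congruence preserves the numbers of positive, negative and zero eigenvalues. The block-diagonal middle factor has exactly $R$ positive eigenvalues (those of $A$) and $M$ negative eigenvalues (those of $-S$), so $\mathcal{K}$ has the same inertia.

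The only point needing care is Step 2, where positive definiteness of $S$ uses both hypotheses at once: invertibility of $A$ from (i) and injectivity of $B^\top$ from (ii). Everything else is routine block algebra. As an alternative check on nonsingularity, one could argue directly: if $A\boldsymbol{c}+B^\top\boldsymbol{\gamma}=\boldsymbol{0}$ and $B\boldsymbol{c}=\boldsymbol{0}$, then $\boldsymbol{c}^\top A\boldsymbol{c}=-\boldsymbol{c}^\top B^\top\boldsymbol{\gamma}=0$, so $\boldsymbol{c}=\boldsymbol{0}$; then $B^\top\boldsymbol{\gamma}=\boldsymbol{0}$, so $\boldsymbol{\gamma}=\boldsymbol{0}$. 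The inertia count, however, still requires the congruence argument.
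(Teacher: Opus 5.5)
Your proof is correct and follows essentially the same route as the paper. Both arguments get $A$ positive definite from (i) and the Schur complement $S=BA^{-1}B^\top$ positive definite from (ii), then use the congruence of $\mathcal{K}$ with $\operatorname{diag}(A,-S)$ and Sylvester's law of inertia. The only cosmetic difference is that the paper proves nonsingularity by the direct kernel argument you list as your alternative, whereas you read it off from $\det\mathcal{K}=\det A\cdot\det(-S)\neq 0$.
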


\begin{proof}
Set
\[
A=V_{N,r}^\top V_{N,r},
\qquad
B=V_{M,r}.
\]
By assumption~$(i)$, $A$ is symmetric positive definite. Indeed, for every $\boldsymbol{x}\in\mathbb{R}^R\setminus\{\boldsymbol{0}\}$, we have
\[
\boldsymbol{x}^\top A \boldsymbol{x}=\boldsymbol{x}^\top V_{N,r}^\top V_{N,r} \boldsymbol{x}
=
\left\| V_{N,r}\boldsymbol{x} \right\|_2^2
>0.
\]
With this notation, we can write
\begin{equation}\label{mat_K}
    \mathcal{K}=
\begin{bmatrix}
A & B^\top\\
B & \boldsymbol{0}
\end{bmatrix}.
\end{equation}
To prove the nonsingularity of $\mathcal{K}$, let $[\boldsymbol{c},\boldsymbol{\gamma}]^{\top}\in\mathbb{R}^R\times\mathbb{R}^M$ satisfy
\[
\mathcal{K}
\begin{bmatrix}
\boldsymbol{c}\\
\boldsymbol{\gamma}
\end{bmatrix}
=
\begin{bmatrix}
\boldsymbol{0}\\
\boldsymbol{0}
\end{bmatrix}.
\]
Then, we have
\begin{eqnarray}\label{saaa}
A\boldsymbol{c}+B^\top\boldsymbol{\gamma}&=&\boldsymbol{0}\\
      B\boldsymbol{c}&=&\boldsymbol{0}. \label{saa2}
\end{eqnarray}
Multiplying~\eqref{saaa} on the left by $\boldsymbol{c}^\top$ gives
\begin{equation}\label{pri:eq}
    \boldsymbol{c}^\top A \boldsymbol{c}
+
\boldsymbol{c}^\top B^\top \boldsymbol{\gamma}
=0, 
\end{equation}
and, by~\eqref{saa2}
\begin{equation}\label{sec:eq}
    \boldsymbol{c}^\top B^\top \boldsymbol{\gamma}
=
(B\boldsymbol{c})^\top \boldsymbol{\gamma}
=0.
\end{equation}
Substituting~\eqref{sec:eq} into~\eqref{pri:eq} gives
\[
\boldsymbol{c}^\top A \boldsymbol{c}=0.
\]
Since $A$ is positive definite, this implies $\boldsymbol{c}=\boldsymbol{0}$. Substituting into~\eqref{saaa} yields
\[
B^\top \boldsymbol{\gamma}=\boldsymbol 0.
\]
By assumption~$(ii)$, $B$ has full row rank, so $\boldsymbol{\gamma}=0$. Hence $\mathcal{K}$ is nonsingular.

To determine the inertia of $\mathcal{K}$, consider the Schur complement
\begin{equation}\label{ShurComp}
    S=BA^{-1}B^\top.
\end{equation}
Since $A^{-1}$ is positive definite and $B$ has full row rank, for every $\boldsymbol{y}\neq\boldsymbol{0}$, we have
\[
\boldsymbol{y}^\top S \boldsymbol{y}=\boldsymbol{y}^\top BA^{-1}B^\top \boldsymbol{y}
=
(B^\top \boldsymbol{y})^\top A^{-1}(B^\top \boldsymbol{y})
>0.
\]
Thus, $S$ is positive definite. Let
\[
T=
\begin{bmatrix}
I & -A^{-1}B^\top\\
\boldsymbol{0} & I
\end{bmatrix}.
\]
Then
\[
T^\top \mathcal{K} T
=
\begin{bmatrix}
A & \boldsymbol{0}\\
\boldsymbol{0} & -S
\end{bmatrix}.
\]
Since $T$ is invertible, $\mathcal{K}$ and $T^\top \mathcal{K} T$ are congruent. By Sylvester's law of inertia, they have the same number of positive and negative eigenvalues. Then, as $A$ and $S$ are positive definite,  it follows that $\mathcal{K}$ has $R$ positive eigenvalues and $M$ negative eigenvalues.
\end{proof}

\begin{remark}
Under the rank assumptions~\eqref{eq:rank_condition_section} and~\eqref{rankM}, 
the constrained least squares problem~\eqref{eq:matrix_problem_sphere} has a unique solution. 
Consequently, the approximant $\hat p_r[f]$ is uniquely defined as an element of $\Pi_r\left(\mathbb{S}^2\right)$.
\end{remark}

We define
\begin{equation}\label{innprod}
    \langle p,q\rangle_{X_N}
:=
\sum_{\boldsymbol{x}_i\in X_N} p\left(\boldsymbol{x}_i\right)q\left(\boldsymbol{x}_i\right).
\end{equation}
By~\eqref{eq:rank_condition_section}, this bilinear form is a discrete inner product on $\Pi_r\left(\mathbb{S}^2\right)$. For any function $h:X_N\to\mathbb{R}$, we denote by 
\begin{equation}\label{normh}
    \|h\|_{2,X_N}
:=
\left(
\sum_{\boldsymbol{x}_i\in X_N}\left|h(\boldsymbol{x}_i)\right|^2
\right)^{1/2},
\end{equation}
the norm induced by the discrete inner product~\eqref{innprod}.
We introduce the spaces
\begin{equation}\label{VrM}
    \mathcal{V}_{r,M}
:=
\left\{
q\in \Pi_r\left(\mathbb{S}^2\right)\, :\, q\left(\boldsymbol{x}_j'\right)=0,\ j=1,\dots,M
\right\},
\end{equation}
and
\begin{equation}\label{ArM}
    \mathcal{A}_{r,M}(f)
:=
\left\{
p\in \Pi_r\left(\mathbb{S}^2\right)\, :\, p\left(\boldsymbol{x}_j'\right)=f\left(\boldsymbol{x}_j'\right),\ j=1,\dots,M
\right\}.
\end{equation}
In the following theorem, we prove that the interpolation--regression approximant $\hat{p}_r[f]$ is the unique element of $\mathcal{A}_{r,M}(f)$ whose residual is orthogonal to $\mathcal{V}_{r,M}$ with respect to $\langle\cdot,\cdot\rangle_{X_N}$.

\begin{theorem}\label{thmorth}
The following statements hold
\begin{itemize}
    \item[(i)] The residual $\hat{p}_r[f]-f$ is orthogonal to $\mathcal{V}_{r,M}$ with respect to
    $\langle\cdot,\cdot\rangle_{X_N}$, namely,
    \[
    \langle \hat{p}_r[f]-f,q\rangle_{X_N}=0,
    \qquad \forall q\in \mathcal{V}_{r,M}.
    \]

    \item[(ii)] $\hat{p}_r[f]$ is the unique element of $\mathcal{A}_{r,M}(f)$ satisfying
    \[
    \langle p-f,q\rangle_{X_N}=0,
    \qquad \forall q\in \mathcal{V}_{r,M}, \quad p\in \mathcal{A}_{r,M}(f).
    \]
\end{itemize}
\end{theorem}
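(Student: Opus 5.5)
We would argue directly from the KKT system~\eqref{eq:kkt_sphere}, using Theorem~\ref{thm1} to guarantee that $\hat p_r[f]$, with coefficient vector $\boldsymbol{c}$, is well defined. The basic dictionary is the following. For $q=\sum_j d_j b_j\in\Pi_r(\mathbb{S}^2)$ with coefficient vector $\boldsymbol{d}\in\mathbb{R}^R$, we have
\[
\langle \hat p_r[f]-f,q\rangle_{X_N}=(V_{N,r}\boldsymbol{c}-\boldsymbol{f}_N)^\top V_{N,r}\boldsymbol{d}.
\]
Moreover, $q\in\mathcal{V}_{r,M}$ if and only if $V_{M,r}\boldsymbol{d}=\boldsymbol{0}$.

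For (i), the first block row of~\eqref{eq:kkt_sphere} gives
\[
V_{N,r}^\top(V_{N,r}\boldsymbol{c}-\boldsymbol{f}_N)=-V_{M,r}^\top\boldsymbol{\gamma}.
\]
Hence, for $q\in\mathcal{V}_{r,M}$,
\[
\langle \hat p_r[f]-f,q\rangle_{X_N}=\boldsymbol{d}^\top V_{N,r}^\top(V_{N,r}\boldsymbol{c}-\boldsymbol{f}_N)=-(V_{M,r}\boldsymbol{d})^\top\boldsymbol{\gamma}=0.
\]
The second block row shows that $\hat p_r[f]\in\mathcal{A}_{r,M}(f)$, so $\hat p_r[f]$ satisfies the property in (ii). This gives existence in (ii).

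For uniqueness in (ii), let $p_1,p_2\in\mathcal{A}_{r,M}(f)$ both satisfy the orthogonality condition. Then $e=p_1-p_2$ vanishes on $X_M$, so $e\in\mathcal{V}_{r,M}$. Subtracting the two orthogonality relations gives $\langle e,q\rangle_{X_N}=0$ for all $q\in\mathcal{V}_{r,M}$. Taking $q=e$ yields $\|e\|_{2,X_N}^2=0$, so $e$ vanishes on all of $X_N$. In coefficients this reads $V_{N,r}\boldsymbol{d}_e=\boldsymbol{0}$, and the rank condition~\eqref{eq:rank_condition_section} forces $\boldsymbol{d}_e=\boldsymbol{0}$. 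Thus $p_1=p_2$.

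The only delicate point is this last step: $\langle\cdot,\cdot\rangle_{X_N}$ is definite on $\Pi_r(\mathbb{S}^2)$ only because $V_{N,r}$ has full column rank, so assumption~\eqref{eq:rank_condition_section} must be invoked explicitly. Everything else is bookkeeping with the block structure of~\eqref{eq:kkt_sphere}.
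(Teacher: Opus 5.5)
Your proof is correct, but for part (i) it follows a different route from the paper.

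\textbf{Part (i).} You read the orthogonality off the first block row of~\eqref{eq:kkt_sphere}: the term $V_{M,r}^\top\boldsymbol{\gamma}$ is annihilated by every coefficient vector in $\ker(V_{M,r})$. The paper argues variationally and without a basis. For $q\in\mathcal{V}_{r,M}$ the whole line $\hat p_r[f]+tq$ stays in $\mathcal{A}_{r,M}(f)$, so $\phi(t)=\|\hat p_r[f]+tq-f\|_{2,X_N}^2$ is minimized at $t=0$, and $\phi'(0)=0$ is exactly the orthogonality relation.

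\textbf{What each route buys.} The paper's argument uses only the defining minimality of $\hat p_r[f]$ and needs neither multipliers nor Theorem~\ref{thm1}. Yours is shorter and ties the characterization directly to the linear system that is actually solved. However, it rests on identifying the constrained minimizer with the $\boldsymbol{c}$-block of the KKT solution. Theorem~\ref{thm1} only gives unique solvability of the KKT system; the paper asserts this identification (``these conditions lead to the KKT system'') rather than proving it. To make your route self-contained, add a one-line sufficiency check. Take any feasible $\boldsymbol{c}'=\boldsymbol{c}+\boldsymbol{d}$ with $V_{M,r}\boldsymbol{d}=\boldsymbol{0}$. Expanding and using the first block row gives $\|V_{N,r}\boldsymbol{c}'-\boldsymbol{f}_N\|_2^2=\|V_{N,r}\boldsymbol{c}-\boldsymbol{f}_N\|_2^2+\|V_{N,r}\boldsymbol{d}\|_2^2$. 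Hence the KKT solution is the unique minimizer.

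\textbf{Part (ii).} This is essentially the paper's argument, and obtaining existence from the second block row is a clean touch. You also make explicit that the last step $\|e\|_{2,X_N}=0\Rightarrow e=0$ needs the full column rank of $V_{N,r}$. The paper leaves this implicit in its earlier remark that $\langle\cdot,\cdot\rangle_{X_N}$ is an inner product on $\Pi_r(\mathbb{S}^2)$.
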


\begin{proof}
We first show that $\mathcal{A}_{r,M}(f)\neq\emptyset$. By~\eqref{rankM}, the linear map
\[
p\in\Pi_r\left(\mathbb{S}^2\right) \mapsto
\left[
p\left(\boldsymbol{x}_1'\right),\dots,p\left(\boldsymbol{x}_M'\right)
\right]^{\top}\in\mathbb{R}^M
\]
is surjective. Therefore, for the prescribed data
\[
\left[
f\left(\boldsymbol{x}_1'\right),\dots,f\left(\boldsymbol{x}_M'\right)
\right]^{\top}\in\mathbb{R}^M,
\]
there exists at least one polynomial $p\in \Pi_r\left(\mathbb{S}^2\right)$ such that
\[
p\left(\boldsymbol{x}_j'\right)=f\left(\boldsymbol{x}_j'\right),
\qquad j=1,\dots,M.
\]
Hence $\mathcal{A}_{r,M}(f)\neq\emptyset$.

Let $q\in\mathcal{V}_{r,M}$. Since $q\left(\boldsymbol{x}_j'\right)=0$ for $j=1,\dots,M$, and $\hat{p}_r[f]\in\mathcal{A}_{r,M}\left(f\right)$, we have
\[
\left(\hat{p}_r[f]+tq\right)\left(\boldsymbol{x}_j'\right)=\hat{p}_r[f]\left(\boldsymbol{x}_j'\right)+tq\left(\boldsymbol{x}_j'\right)=\hat{p}_r[f]\left(\boldsymbol{x}_j'\right)=f\left(\boldsymbol{x}_j'\right), \quad j=1,\dots,M, \quad \forall t\in\mathbb{R},
\]
which shows that
\[
\hat{p}_r[f]+tq\in\mathcal{A}_{r,M}\left(f\right),
\qquad \forall t\in\mathbb{R}.
\]
Define
\[
\phi\left(t\right):=
\sum_{i=1}^N
\left(
\hat{p}_r[f]\left(\boldsymbol{x}_i\right)+tq\left(\boldsymbol{x}_i\right)-f\left(\boldsymbol{x}_i\right)
\right)^2.
\]
Since $\hat{p}_r[f]$ minimizes the constrained least squares functional over $\mathcal{A}_{r,M}(f)$, the function $\phi$ attains its minimum at $t=0$. Hence
\[
\phi'\left(0\right)=0.
\]
Differentiating, we obtain
\[
\phi'\left(t\right)
=
2\sum_{i=1}^N
\left(
\hat{p}_r[f]\left(\boldsymbol{x}_i\right)+tq\left(\boldsymbol{x}_i\right)-f\left(\boldsymbol{x}_i\right)
\right)
q\left(\boldsymbol{x}_i\right).
\]
Evaluating at $t=0$, we have
\[
0=\phi'\left(0\right)
=
2\sum_{i=1}^N
\left(
\hat{p}_r[f]\left(\boldsymbol{x}_i\right)-f\left(\boldsymbol{x}_i\right)
\right)
q\left(\boldsymbol{x}_i\right)
=
2\left\langle \hat{p}_r[f]-f,q\right\rangle_{X_N}.
\]
Therefore
\[
\left\langle \hat{p}_r[f]-f,q\right\rangle_{X_N}=0,
\qquad \forall q\in\mathcal{V}_{r,M}.
\]
This proves $(i)$.
\medskip

 Let $p\in\mathcal{A}_{r,M}\left(f\right)$ satisfy
\begin{equation}\label{ortcond}
 \left\langle p-f,q\right\rangle_{X_N}=0,
\qquad \forall q\in\mathcal{V}_{r,M}.   
\end{equation}
Since both $p$ and $\hat{p}_r[f]$ belong to $\mathcal{A}_{r,M}\left(f\right)$, they satisfy the same interpolation conditions on $X_M$, and therefore
\[
\left(p-\hat{p}_r[f]\right)\left(\boldsymbol{x}_j'\right)=0,
\qquad j=1,\dots,M,
\]
that is
\[
p-\hat{p}_r[f]\in\mathcal{V}_{r,M}.
\]
Taking $q=p-\hat{p}_r[f]$ in the orthogonality relations~\eqref{ortcond}, we obtain
\[
\left\langle p-f,p-\hat{p}_r[f]\right\rangle_{X_N}=0,
\]
and by $(i)$, we also have
\[
\left\langle \hat{p}_r[f]-f,p-\hat{p}_r[f]\right\rangle_{X_N}=0.
\]
Subtracting the two identities yields
\[
\left\langle p-\hat{p}_r[f],p-\hat{p}_r[f]\right\rangle_{X_N}=0.
\]
Hence $p=\hat{p}_r[f]$, which proves uniqueness. This concludes the proof.
\end{proof}

\begin{remark}
\label{rem:polynomial_reproduction}
If $f \in \Pi_r\left(\mathbb{S}^2\right)$, then $\hat p_r[f]=f$. 
Indeed, $f \in \mathcal{A}_{r,M}(f)$ and the discrete least squares functional vanishes at $f$, so the claim follows from uniqueness of the minimizer.
\end{remark}

\begin{remark}
The previous result is purely algebraic and does not depend on the geometry of the sphere. In particular, both the solvability of the KKT system and the orthogonal characterization of the interpolation--regression approximant depend only on the rank conditions of the associated Vandermonde matrices.
\end{remark}

\subsection{Antipodal symmetry and parity decomposition}

We now consider the antipodally symmetric setting, where the parity of the spherical harmonics plays an important role.  Throughout this subsection, we assume that the sampling set $X_N$ and the interpolation set $X_M$ are antipodally symmetric, that is
\[
\boldsymbol{x}_i\in X_N \implies -\boldsymbol{x}_i\in X_N,
\qquad
\boldsymbol{x}_j^{\prime}\in X_M \implies -\boldsymbol{x}_j^{\prime}\in X_M.
\]
We introduce the subspaces
\[
\Pi_r^{+}\left(\mathbb{S}^2\right)
:=
\left\{
p\in \Pi_r\left(\mathbb{S}^2\right) \,:\, p\left(-\boldsymbol{x}\right)=p\left(\boldsymbol{x}\right)
\ \text{for all }\boldsymbol{x}\in\mathbb{S}^2
\right\},
\]
and
\[
\Pi_r^{-}\left(\mathbb{S}^2\right)
:=
\left\{
p\in \Pi_r\left(\mathbb{S}^2\right)\,:\, p\left(-\boldsymbol{x}\right)=-p\left(\boldsymbol{x}\right)
\ \text{for all }\boldsymbol{x}\in\mathbb{S}^2
\right\}.
\]
Under this assumption, the space $\Pi_r\left(\mathbb{S}^2\right)$ admits a natural decomposition into even and odd components.

\begin{proposition}\label{prop1}
The subspaces $\Pi_r^{+}\left(\mathbb{S}^2\right)$ and $\Pi_r^{-}\left(\mathbb{S}^2\right)$ are orthogonal with respect to the discrete inner product~\eqref{innprod}. Moreover, we have
\begin{equation}\label{decomp}
\Pi_r\left(\mathbb{S}^2\right)=\Pi_r^{+}\left(\mathbb{S}^2\right)\oplus \Pi_r^{-}\left(\mathbb{S}^2\right).
\end{equation}
\end{proposition}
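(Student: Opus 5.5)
The proof has two independent parts: orthogonality, which uses the antipodal symmetry of $X_N$, and the direct sum decomposition, which holds on the whole sphere regardless of the nodes.

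\textbf{Orthogonality.} Take $p\in\Pi_r^{+}(\mathbb{S}^2)$ and $q\in\Pi_r^{-}(\mathbb{S}^2)$, and set $h:=pq$. Then $h(-\boldsymbol{x})=-h(\boldsymbol{x})$ for every $\boldsymbol{x}\in\mathbb{S}^2$. Because $X_N$ is antipodally symmetric, the map $\boldsymbol{x}\mapsto-\boldsymbol{x}$ is a bijection of $X_N$ onto itself. Hence
\[
\langle p,q\rangle_{X_N}=\sum_{\boldsymbol{x}_i\in X_N}h(\boldsymbol{x}_i)=\sum_{\boldsymbol{x}_i\in X_N}h(-\boldsymbol{x}_i)=-\langle p,q\rangle_{X_N},
\]
so $\langle p,q\rangle_{X_N}=0$. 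This reindexing is the only place where the symmetry of the nodes enters.

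\textbf{Decomposition.} For $p\in\Pi_r(\mathbb{S}^2)$ define
\[
p^{\pm}(\boldsymbol{x}):=\tfrac12\bigl(p(\boldsymbol{x})\pm p(-\boldsymbol{x})\bigr),
\]
so that $p=p^++p^-$. The step that needs a justification is that $p^{\pm}$ again lie in $\Pi_r(\mathbb{S}^2)$, i.e.\ that the space is invariant under $\boldsymbol{x}\mapsto-\boldsymbol{x}$. I would argue this in one of two ways.
\begin{itemize}
\item \emph{Restrictions.} $\Pi_r(\mathbb{S}^2)$ consists of restrictions to $\mathbb{S}^2$ of polynomials in $(x,y,z)$ of total degree at most $r$. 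Composing such a polynomial with $-I$ preserves its degree.
\item \emph{Basis.} Using the basis~\eqref{real_armsfbas}, one has $u_{\ell,m}(-\boldsymbol{x})=(-1)^\ell u_{\ell,m}(\boldsymbol{x})$. Indeed, the antipode maps $(\varphi,\theta)$ to $(\pi-\varphi,\theta+\pi)$. Then $P_\ell^{|m|}(-t)=(-1)^{\ell+|m|}P_\ell^{|m|}(t)$, while $\cos$ and $\sin$ of $|m|(\theta+\pi)$ contribute a factor $(-1)^{|m|}$.
\end{itemize}
The basis argument gives more: $\Pi_r^{+}$ is spanned by the $u_{\ell,m}$ with $\ell$ even and $\Pi_r^{-}$ by those with $\ell$ odd. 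That description is useful later in the paper.

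\textbf{Directness of the sum.} If $p\in\Pi_r^{+}\cap\Pi_r^{-}$, then $p(\boldsymbol{x})=p(-\boldsymbol{x})=-p(\boldsymbol{x})$, so $p\equiv0$. Alternatively, since the bilinear form~\eqref{innprod} is an inner product on $\Pi_r(\mathbb{S}^2)$ by~\eqref{eq:rank_condition_section}, orthogonality alone forces the intersection to be trivial.

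The only genuinely delicate step is the parity of the associated Legendre factors in the basis argument. I would avoid it by using the ambient-polynomial description, and mention the basis-level parity as a consequence.
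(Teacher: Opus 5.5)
Your proof is correct and follows the paper's argument. Orthogonality comes from the antipodal symmetry of $X_N$: your reindexing via the bijection $\boldsymbol{x}\mapsto-\boldsymbol{x}$ is the same cancellation the paper gets by summing over antipodal pairs. The decomposition comes from the even/odd splitting $p=p^{+}+p^{-}$. You also justify two points the paper leaves implicit: that $p^{\pm}\in\Pi_r(\mathbb{S}^2)$, and that $\Pi_r^{+}(\mathbb{S}^2)\cap\Pi_r^{-}(\mathbb{S}^2)=\{0\}$, which makes the direct-sum claim fully rigorous.
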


\begin{proof}
Let $p\in \Pi_r^{+}\left(\mathbb{S}^2\right)$ and $q\in \Pi_r^{-}\left(\mathbb{S}^2\right)$. Since $X_N$ is antipodally symmetric, its elements can be partitioned into antipodal pairs $\left\{\boldsymbol{x}_i,-\boldsymbol{x}_i\right\}$. For each such pair, we have
\[
p\left(-\boldsymbol{x}_i\right)q\left(-\boldsymbol{x}_i\right)
=
p\left(\boldsymbol{x}_i\right)\left(-q\left(\boldsymbol{x}_i\right)\right)
=
-p\left(\boldsymbol{x}_i\right)q\left(\boldsymbol{x}_i\right).
\]
Hence the contributions of each antipodal pair is zero. Therefore
\[
\langle p,q\rangle_{X_N}=0.
\]

For any $p\in \Pi_r\left(\mathbb{S}^2\right)$, the decomposition~\eqref{decomp} follows from the standard splitting
\[
p^{+}\left(\boldsymbol{x}\right):=\frac{p\left(\boldsymbol{x}\right)+p\left(-\boldsymbol{x}\right)}{2},
\qquad
p^{-}\left(\boldsymbol{x}\right):=\frac{p\left(\boldsymbol{x}\right)-p\left(-\boldsymbol{x}\right)}{2}.
\]
Then
\[
p=p^{+}+p^{-}.
\]
\end{proof}

\begin{remark}
    In the antipodally symmetric setting, the cardinality $M = (m+1)^2$ of the interpolation set must be even. This implies that $m$ must be odd. Therefore, the convention $M = (m+1)^2$ is compatible with antipodal symmetry only for odd values of $m$.
\end{remark}

\begin{remark}
Using the identities
\begin{eqnarray*}
P_\ell^m(-t)&=&(-1)^{\ell+m}P_\ell^m(t),\\
\cos\left(m(\theta+\pi)\right)&=&(-1)^m\cos(m\theta),\\
\sin\left(m(\theta+\pi)\right)&=&(-1)^m\sin(m\theta),
\end{eqnarray*}
together with the fact that the antipodal point of $\boldsymbol{x}(\varphi,\theta)$ is
$\boldsymbol{x}(\pi-\varphi,\theta+\pi)$, it follows that the real spherical harmonics satisfy
\begin{equation}\label{parcond}
    u_{\ell,m}\left(-\boldsymbol{x}\right)=(-1)^\ell u_{\ell,m}\left(\boldsymbol{x}\right),
\qquad \boldsymbol{x}\in\mathbb{S}^2.
\end{equation}
\end{remark}

The antipodal symmetry of the sampling and interpolation sets, together with the
parity of the real spherical harmonics, allows the constrained problem to be
decomposed into even and odd components. Let $\mathcal{B}_r$ be the basis defined
in~\eqref{real_armsfbas}, and set
\[
\mathcal{B}_r^{+}
:=
\left\{
u_{\ell,m}\in\mathcal{B}_r \,:\, \ell \ \text{even}
\right\},
\qquad
\mathcal{B}_r^{-}
:=
\left\{
u_{\ell,m}\in\mathcal{B}_r \,:\, \ell \ \text{odd}
\right\}.
\]
In what follows, the spherical harmonics are ordered by listing first the elements of
$\mathcal{B}_r^{+}$ and then those of $\mathcal{B}_r^{-}$.
For any function $f:\mathbb{S}^2\to\mathbb{R}$, define its even and odd parts by
\[
f^{+}(\boldsymbol{x})
:=
\frac{f(\boldsymbol{x})+f(-\boldsymbol{x})}{2},
\qquad
f^{-}(\boldsymbol{x})
:=
\frac{f(\boldsymbol{x})-f(-\boldsymbol{x})}{2},
\qquad \boldsymbol{x}\in\mathbb{S}^2.
\]

\begin{theorem}\label{thm:parity_decoupling}
For any function $f:\mathbb{S}^2\to\mathbb{R}$ the interpolation--regression approximant $\hat p_r[f]$ admits the representation
\begin{equation}\label{eq:parity_split_approx}
\hat p_r[f]
=
\hat p_r^{+}\left[f^{+}\right]
+
\hat p_r^{-}\left[f^{-}\right],
\end{equation}
where $\hat p_r^{+}\left[f^{+}\right]\in \Pi_r^{+}\left(\mathbb{S}^2\right)$ and $\hat p_r^{-}\left[f^{-}\right]\in \Pi_r^{-}\left(\mathbb{S}^2\right)$ are uniquely determined by the two independent constrained minimization problems
\begin{equation}\label{eq:even_problem}
\hat p_r^{+}\left[f^{+}\right]
=
\operatorname*{argmin}_{p\in \Pi_r^{+}\left(\mathbb{S}^2\right)}
\sum_{\boldsymbol{x}_i\in X_N}
\left|
p\left(\boldsymbol{x}_i\right)-f^{+}\left(\boldsymbol{x}_i\right)
\right|^2
\quad
\text{subject to}
\quad
p\left(\boldsymbol{x}_j^{\prime}\right)=f^{+}\left(\boldsymbol{x}_j^{\prime}\right),
\ \boldsymbol{x}_j^{\prime}\in X_M,
\end{equation}
and
\begin{equation}\label{eq:odd_problem}
\hat p_r^{-}\left[f^{-}\right]
=
\operatorname*{argmin}_{p\in \Pi_r^{-}\left(\mathbb{S}^2\right)}
\sum_{\boldsymbol{x}_i\in X_N}
\left|
p\left(\boldsymbol{x}_i\right)-f^{-}\left(\boldsymbol{x}_i\right)
\right|^2
\quad
\text{subject to}
\quad
p\left(\boldsymbol{x}_j^{\prime}\right)=f^{-}\left(\boldsymbol{x}_j^{\prime}\right),
\ \boldsymbol{x}_j^{\prime}\in X_M.
\end{equation}
\end{theorem}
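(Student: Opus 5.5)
The plan is to work with the characterization of Theorem~\ref{thmorth} rather than with the KKT system directly, and to use the parity splitting of Proposition~\ref{prop1}.

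\textbf{Step 1: well-posedness of the even and odd problems.} I first show that each subproblem~\eqref{eq:even_problem} and~\eqref{eq:odd_problem} has a unique solution.

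The constraint sets are nonempty. Take any $p\in\mathcal{A}_{r,M}(f^{+})$; this set is nonempty by the surjectivity argument in the proof of Theorem~\ref{thmorth}. Its even part $p^{+}$ is again a feasible polynomial, because $X_M$ is antipodally symmetric and $f^{+}$ is even: for $\boldsymbol{x}_j'\in X_M$,
\[
p^{+}(\boldsymbol{x}_j')=\tfrac12\left(f^{+}(\boldsymbol{x}_j')+f^{+}(-\boldsymbol{x}_j')\right)=f^{+}(\boldsymbol{x}_j').
\]
The same argument works for the odd case.

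Each functional is a strictly convex quadratic on the affine feasible set. Indeed, $\|\cdot\|_{2,X_N}$ is a norm on $\Pi_r(\mathbb{S}^2)$ by~\eqref{eq:rank_condition_section}, hence also on the subspaces $\Pi_r^{\pm}(\mathbb{S}^2)$. Therefore each minimizer $\hat p_r^{\pm}[f^{\pm}]$ exists and is unique.

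Repeating the variational argument of Theorem~\ref{thmorth}(i) inside $\Pi_r^{\pm}(\mathbb{S}^2)$ gives the orthogonality relations
\[
\langle \hat p_r^{\pm}[f^{\pm}]-f^{\pm},q\rangle_{X_N}=0 \qquad \text{for all } q\in\mathcal{V}_{r,M}\cap\Pi_r^{\pm}(\mathbb{S}^2).
\]

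\textbf{Step 2: the sum satisfies the characterization of Theorem~\ref{thmorth}(ii).} Set $p:=\hat p_r^{+}[f^{+}]+\hat p_r^{-}[f^{-}]$.

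Clearly $p\in\Pi_r(\mathbb{S}^2)$. It interpolates $f$ on $X_M$, since $f=f^{+}+f^{-}$ pointwise. Thus $p\in\mathcal{A}_{r,M}(f)$.

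Now let $q\in\mathcal{V}_{r,M}$ and split $q=q^{+}+q^{-}$. Because $X_M$ is antipodally symmetric, $q(\pm\boldsymbol{x}_j')=0$ implies $q^{\pm}(\boldsymbol{x}_j')=0$. Hence $q^{\pm}\in\mathcal{V}_{r,M}\cap\Pi_r^{\pm}(\mathbb{S}^2)$; this stability of $\mathcal{V}_{r,M}$ under the parity splitting is the key point.

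Expanding by bilinearity,
\[
\langle p-f,q\rangle_{X_N}=\sum_{\sigma,\tau\in\{+,-\}}\langle \hat p_r^{\sigma}[f^{\sigma}]-f^{\sigma},q^{\tau}\rangle_{X_N}.
\]
The mixed terms ($\sigma\neq\tau$) vanish. The argument of Proposition~\ref{prop1} applies verbatim: pairing an even function on $X_N$ with an odd one gives zero by cancellation over antipodal pairs, and this uses only parity, not polynomiality. Note that $\hat p_r^{\pm}[f^{\pm}]-f^{\pm}$ has parity $\pm$. The diagonal terms vanish by the orthogonality relations of Step~1.

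So $\langle p-f,q\rangle_{X_N}=0$ for every $q\in\mathcal{V}_{r,M}$. The uniqueness in Theorem~\ref{thmorth}(ii) then gives $p=\hat p_r[f]$, which is~\eqref{eq:parity_split_approx}.

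\textbf{Main obstacle.} The delicate point is the bookkeeping of parity for functions that are not polynomials: $f$, $f^{\pm}$ and the residuals. One must check that the cancellation in Proposition~\ref{prop1} really needs only antipodal symmetry of $X_N$ and parity of the restrictions to $X_N$.

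A second point to watch is uniqueness of the subproblem minimizers. If instead one argued via a reduced KKT system with full row rank of the restricted Vandermonde matrix, that could fail, because the constraints on $X_M$ are redundant in pairs. Arguing through norms and strict convexity, as in Step~1, avoids this issue.
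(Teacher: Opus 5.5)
Your proof is correct, but it takes a different route from the paper.

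The paper works directly with the objective. On an antipodally symmetric $X_M$, it shows that the constraints $p(\boldsymbol{x}_j')=f(\boldsymbol{x}_j')$ are equivalent to the separate constraints $p^{\pm}(\boldsymbol{x}_j')=f^{\pm}(\boldsymbol{x}_j')$. It then shows that the cross term $\langle p^{+}-f^{+},p^{-}-f^{-}\rangle_{X_N}$ vanishes, so $\mathcal{J}(p;f)=\mathcal{J}(p^{+};f^{+})+\mathcal{J}(p^{-};f^{-})$. The feasible set is therefore a product, and the minimization separates. Uniqueness of the two subproblem minimizers is inherited from uniqueness of $\hat p_r[f]$: two minimizers of the even problem, combined with the same odd part, would give two minimizers of the full problem.

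You instead work at the level of first-order optimality. You establish existence and uniqueness of each subproblem independently by strict convexity and derive the reduced orthogonality relations. You then check that the sum satisfies the characterization of Theorem~\ref{thmorth}(ii), using the extra observation that $\mathcal{V}_{r,M}$ is stable under the even/odd splitting.

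Both arguments rest on the same two ingredients: even--odd cancellation over antipodal pairs of $X_N$, and decoupling of the constraints on $X_M$. The paper's version is shorter and needs no appeal to the orthogonality characterization. Yours makes the well-posedness of the subproblems explicit rather than inherited. It also correctly flags that the reduced constraint matrices have duplicated rows. So the subproblems cannot be handled by a full-row-rank KKT argument unless one representative of each antipodal pair is discarded, which is exactly the content of the remark following the theorem in the paper.
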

\begin{proof}
For any $p\in\Pi_r\left(\mathbb{S}^2\right)$, we write
\[
p=p^++p^-, \quad p^{+}\in\Pi_r^{+}\left(\mathbb{S}^2\right), \quad p^-\in\Pi_r^{-}\left(\mathbb{S}^2\right).
\]
Since $X_M$ is antipodally symmetric, the interpolation conditions
\[
p\left(\boldsymbol{x}_j^{\prime}\right)=f\left(\boldsymbol{x}_j^{\prime}\right),
\qquad \boldsymbol{x}_j^{\prime}\in X_M,
\]
are equivalent, by adding and subtracting the equations on each antipodal pair, to
\begin{equation}\label{seceqa}
p^{+}\left(\boldsymbol{x}_j^{\prime}\right)=f^{+}\left(\boldsymbol{x}_j^{\prime}\right),
\qquad
p^{-}\left(\boldsymbol{x}_j^{\prime}\right)=f^{-}\left(\boldsymbol{x}_j^{\prime}\right),
\qquad
\boldsymbol{x}_j^{\prime}\in X_M.
\end{equation}

Let
\begin{equation}\label{fun_J}
    \mathcal{J}(p;f)
:=
\sum_{\boldsymbol{x}_i\in X_N}
\left| p\left(\boldsymbol{x}_i\right)-f\left(\boldsymbol{x}_i\right) \right|^2.
\end{equation}
Using the decomposition
\[
p-f=\left(p^{+}-f^{+}\right)+\left(p^{-}-f^{-}\right),
\]
we have
\begin{eqnarray*}
\mathcal{J}(p;f)
&=&
\sum_{\boldsymbol{x}_i\in X_N}
\left|
\left(p^{+}-f^{+}\right)\left(\boldsymbol{x}_i\right)
+
\left(p^{-}-f^{-}\right)\left(\boldsymbol{x}_i\right)
\right|^2 \\
&=&
\sum_{\boldsymbol{x}_i\in X_N}
\left| \left(p^{+}-f^{+}\right)\left(\boldsymbol{x}_i\right) \right|^2
+
\sum_{\boldsymbol{x}_i\in X_N}
\left| \left(p^{-}-f^{-}\right)\left(\boldsymbol{x}_i\right) \right|^2 
+
2\left\langle p^{+}-f^{+} , p^{-}-f^{-}\right\rangle_{X_N}.
\end{eqnarray*}
Since $p^{+}-f^{+}$ is even and $p^{-}-f^{-}$ is odd, and since $X_N$ is antipodally symmetric, we have
\[
\left\langle p^{+}-f^{+} , p^{-}-f^{-}\right\rangle_{X_N}=0.
\]
Hence
\begin{equation}\label{eq:energy_split}
\mathcal{J}(p;f)
=
\sum_{\boldsymbol{x}_i\in X_N}
\left| p^{+}\left(\boldsymbol{x}_i\right)-f^{+}\left(\boldsymbol{x}_i\right) \right|^2
+
\sum_{\boldsymbol{x}_i\in X_N}
\left| p^{-}\left(\boldsymbol{x}_i\right)-f^{-}\left(\boldsymbol{x}_i\right) \right|^2.
\end{equation}
The original constrained minimization problem consists of minimizing $\mathcal{J}(p;f)$ over all
\[
p\in \Pi_r\left(\mathbb{S}^2\right)
\]
subject to the interpolation conditions
\[
p\left(\boldsymbol{x}_j^{\prime}\right)=f\left(\boldsymbol{x}_j^{\prime}\right),
\qquad
\boldsymbol{x}_j^{\prime}\in X_M.
\]
Therefore, minimizing the original constrained functional is equivalent to minimizing independently
the even and odd contributions in~\eqref{eq:energy_split} under the corresponding  constraints~\eqref{seceqa}. Hence the minimizer $\hat p_r[f]$ decomposes as
\[
\hat p_r[f]
=
\hat p_r^{+}\left[f^{+}\right]
+
\hat p_r^{-}\left[f^{-}\right],
\]
where $\hat p_r^{+}\left[f^{+}\right]$ and $\hat p_r^{-}\left[f^{-}\right]$ solve
\eqref{eq:even_problem} and~\eqref{eq:odd_problem}, respectively. The uniqueness follows from the uniqueness of the original constrained least squares problem~\eqref{eq:matrix_problem_sphere}.
\end{proof}

\begin{remark}
Since $X_M$ is antipodally symmetric, it is enough in
\eqref{eq:even_problem} and~\eqref{eq:odd_problem} to impose the interpolation
conditions on one representative of each antipodal pair.
\end{remark}

Antipodal symmetry also implies a block structure for the discrete normal matrix  associated with the spherical Vandermonde matrix. In particular, the conditioning of the full Vandermonde matrix is determined by its even and odd components.

\begin{theorem}\label{thm:spectral_split}
The spherical Vandermonde matrix can be written as
\[
V_{N,r} = \left[ V_{N,r}^{+} \;\; V_{N,r}^{-} \right],
\]
where $V_{N,r}^{+}$ and $V_{N,r}^{-}$ denote the submatrices corresponding to the even and odd basis functions, respectively. The following relations hold
\begin{align}
\sigma_{\max}(V_{N,r}) &= \max \left\{ \sigma_{\max}(V_{N,r}^{+}), \sigma_{\max}(V_{N,r}^{-}) \right\}, \label{eq:smax_split} \\
\sigma_{\min}(V_{N,r}) &= \min \left\{ \sigma_{\min}(V_{N,r}^{+}), \sigma_{\min}(V_{N,r}^{-}) \right\}, \label{eq:smin_split}
\end{align}
and consequently
\begin{equation}\label{eq:cond_split}
\kappa_2(V_{N,r}) = \frac{
\max \left\{ \sigma_{\max}(V_{N,r}^{+}), \sigma_{\max}(V_{N,r}^{-}) \right\}
}{
\min \left\{ \sigma_{\min}(V_{N,r}^{+}), \sigma_{\min}(V_{N,r}^{-}) \right\}
}.
\end{equation}
\end{theorem}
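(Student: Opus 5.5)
The plan is to show that the Gram (normal) matrix $V_{N,r}^\top V_{N,r}$ is block diagonal with respect to the even/odd ordering of the basis. The singular values of $V_{N,r}$ are then the union of those of $V_{N,r}^{+}$ and $V_{N,r}^{-}$.

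\textbf{Step 1: block diagonality.} With the ordering fixed above (first $\mathcal{B}_r^{+}$, then $\mathcal{B}_r^{-}$), the columns of $V_{N,r}$ split as $\left[V_{N,r}^{+}\;\; V_{N,r}^{-}\right]$. The entries of $(V_{N,r}^{+})^\top V_{N,r}^{-}$ are the discrete inner products $\langle u_{\ell,m},u_{\ell',m'}\rangle_{X_N}$ with $\ell$ even and $\ell'$ odd. By the parity relation~\eqref{parcond}, $u_{\ell,m}\in\Pi_r^{+}(\mathbb{S}^2)$ and $u_{\ell',m'}\in\Pi_r^{-}(\mathbb{S}^2)$. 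Proposition~\ref{prop1} then shows these inner products vanish, because $X_N$ is antipodally symmetric. Hence
\[
V_{N,r}^\top V_{N,r}=
\begin{bmatrix}
(V_{N,r}^{+})^\top V_{N,r}^{+} & \boldsymbol{0}\\
\boldsymbol{0} & (V_{N,r}^{-})^\top V_{N,r}^{-}
\end{bmatrix}.
\]

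\textbf{Step 2: spectra.} The eigenvalues of a block diagonal symmetric matrix are the union, with multiplicities, of the eigenvalues of its diagonal blocks. The squared singular values of $V_{N,r}$ are the eigenvalues of $V_{N,r}^\top V_{N,r}$. Likewise, the squared singular values of $V_{N,r}^{\pm}$ are the eigenvalues of $(V_{N,r}^{\pm})^\top V_{N,r}^{\pm}$; here we take the $\#\mathcal{B}_r^{\pm}$ right singular values, which is the meaningful count because $N\ge R$. Taking the maximum and the minimum of this union and then square roots gives \eqref{eq:smax_split} and \eqref{eq:smin_split}.

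\textbf{Step 3: condition number.} By assumption~\eqref{eq:rank_condition_section}, $V_{N,r}$ has full column rank. Each block therefore has full column rank as well, so $\sigma_{\min}(V_{N,r}^{\pm})>0$. Then $\kappa_2(V_{N,r})=\sigma_{\max}(V_{N,r})/\sigma_{\min}(V_{N,r})$, and substituting Step 2 yields \eqref{eq:cond_split}.

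No step is a real obstacle. The only point needing care is Step 2, where $\sigma_{\min}$ must be read as the smallest of the column-count singular values, so that the union-of-spectra argument applies exactly.
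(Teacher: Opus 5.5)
Your proposal is correct and follows essentially the same route as the paper. Antipodal symmetry of $X_N$ gives $(V_{N,r}^{+})^\top V_{N,r}^{-}=0$, so the normal matrix is block diagonal, the singular values of $V_{N,r}$ are the union of those of $V_{N,r}^{\pm}$, and the full column rank of $V_{N,r}$ makes $\sigma_{\min}(V_{N,r}^{\pm})>0$. Your explicit use of \eqref{parcond} and Proposition~\ref{prop1} for the vanishing off-diagonal block only fills in a step the paper states without justification.
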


\begin{proof}
Since $X_N$ is antipodally symmetric, we have
\[
\left(V_{N,r}^{+}\right)^{\top}V_{N,r}^{-}=0.
\]
Therefore, the discrete normal matrix admits the block diagonal decomposition
\begin{equation}\label{eq:block_normal_matrix}
V_{N,r}^{\top}V_{N,r}
=
\begin{bmatrix}
\left(V_{N,r}^{+}\right)^{\top}V_{N,r}^{+} & 0 \\[4pt]
0 & \left(V_{N,r}^{-}\right)^{\top}V_{N,r}^{-}
\end{bmatrix}.
\end{equation}
Since $\operatorname{rank}\left(V_{N,r}\right)=R$, the columns of $V_{N,r}$ are linearly independent, and therefore any subset of columns is linearly independent. The eigenvalues of a block diagonal matrix are precisely the eigenvalues of its diagonal blocks, counted with multiplicity. It follows that the singular values of $V_{N,r}$ are obtained by collecting the singular values of $V^+_{N,r}$ and $V^-_{N,r}$.  The rest of the proof follows immediately from this observation.
\end{proof}

The parity structure of the real spherical harmonics implies an equivariance property of the interpolation--regression approximant under the antipodal map. This is formalized in the following theorem.

\begin{theorem}\label{thm:antipodal-equivariance}
For any function $f:\mathbb{S}^2\to\mathbb{R}$ 
\begin{equation}\label{eq:equivariance_antipodal}
\hat{p}_r\left[f^\star\right]=\left(\hat{p}_r[f]\right)^\star ,
\end{equation}
where
\[
f^\star(\boldsymbol{x}):=f(-\boldsymbol{x}),
\qquad \boldsymbol{x}\in\mathbb{S}^2.
\]
Thus, the interpolation--regression operator on the sphere is equivariant under the antipodal map.
\end{theorem}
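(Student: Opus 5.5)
The plan is to use the uniqueness characterization of Theorem~\ref{thmorth}. Setting $q:=\left(\hat p_r[f]\right)^\star$, I will show that $q\in\mathcal{A}_{r,M}(f^\star)$ and that $q-f^\star$ is orthogonal to $\mathcal{V}_{r,M}$ with respect to $\langle\cdot,\cdot\rangle_{X_N}$. Part~(ii) of Theorem~\ref{thmorth} then gives $q=\hat p_r[f^\star]$. The rank assumptions under which $\hat p_r$ is well defined are the same for $f$ and $f^\star$, because they involve only the node sets and not the data.

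First, $q\in\Pi_r(\mathbb{S}^2)$, since the map $p\mapsto p^\star$ preserves $\Pi_r(\mathbb{S}^2)$. This follows from \eqref{parcond}: every real spherical harmonic satisfies $u_{\ell,m}^\star=(-1)^\ell u_{\ell,m}$. Equivalently, $p\mapsto p^\star$ acts as $\pm1$ on $\Pi_r^{\pm}(\mathbb{S}^2)$. Second, I check the interpolation conditions. For $\boldsymbol{x}_j'\in X_M$, antipodal symmetry gives $-\boldsymbol{x}_j'\in X_M$, so
\[
q(\boldsymbol{x}_j')=\hat p_r[f](-\boldsymbol{x}_j')=f(-\boldsymbol{x}_j')=f^\star(\boldsymbol{x}_j').
\]
Hence $q\in\mathcal{A}_{r,M}(f^\star)$.

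Third, I verify orthogonality. Let $v\in\mathcal{V}_{r,M}$. Then $v^\star\in\Pi_r(\mathbb{S}^2)$, and $v^\star$ vanishes on $X_M$ because $X_M=-X_M$; so $v^\star\in\mathcal{V}_{r,M}$. Since $\boldsymbol{x}\mapsto-\boldsymbol{x}$ permutes $X_N$, reindexing the sum in \eqref{innprod} gives
\[
\langle q-f^\star,v\rangle_{X_N}=\sum_{\boldsymbol{x}_i\in X_N}\left(\hat p_r[f]-f\right)(-\boldsymbol{x}_i)\,v(\boldsymbol{x}_i)=\langle \hat p_r[f]-f,v^\star\rangle_{X_N}=0,
\]
where the last equality is Theorem~\ref{thmorth}(i). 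Uniqueness then yields \eqref{eq:equivariance_antipodal}.

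An alternative route goes through Theorem~\ref{thm:parity_decoupling}. Since $(f^\star)^{\pm}=\pm f^{\pm}$ and the even and odd subproblems are linear in the data, we get $\hat p_r[f^\star]=\hat p_r^+[f^+]-\hat p_r^-[f^-]$, which equals $\left(\hat p_r[f]\right)^\star$. I expect no real obstacle here. The only point needing care is that the reindexing uses the fact that the antipodal map is a bijection of $X_N$ and of $X_M$, which is exactly the symmetry standing assumption of this subsection.
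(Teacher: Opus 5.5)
Your proof is correct, and both of your routes work. The main argument has the same skeleton as the paper's: take the candidate $(\hat p_r[f])^\star$, check it lies in $\Pi_r(\mathbb{S}^2)$ via \eqref{parcond}, check the interpolation conditions using $X_M=-X_M$, reindex sums using $X_N=-X_N$, and conclude by uniqueness. Where you differ is in which defining property you verify.

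The paper works directly with minimality. For every $q$ feasible for $f^\star$, it notes that $q^\star$ is feasible for $f$. It then compares objective values against $\hat p_r[f]$, reindexes to get the minimality inequality for $f^\star$, and invokes uniqueness of the constrained minimizer. You instead verify the orthogonality characterization of Theorem~\ref{thmorth}(ii). Your only extra ingredient is that $\mathcal{V}_{r,M}$ is invariant under $v\mapsto v^\star$, which is the linear counterpart of the paper's observation that $\star$ maps $\mathcal{A}_{r,M}(f^\star)$ onto $\mathcal{A}_{r,M}(f)$.

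The paper's version needs nothing beyond the definition of $\hat p_r$ as the unique minimizer guaranteed by Theorem~\ref{thm1}. Yours is slightly more economical once Theorem~\ref{thmorth} is available, because it checks a single identity rather than an inequality against every competitor. Both transfer verbatim to Theorem~\ref{thm:group-equivariance} with $-I$ replaced by $Q$.

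Your alternative through Theorem~\ref{thm:parity_decoupling} is also valid. It uses that the odd subproblem is homogeneous in its data: $-\hat p_r^{-}[f^{-}]$ is feasible for the data $-f^{-}$ with the same residual, so uniqueness applies. Its merit is the explicit formula $\hat p_r[f^\star]=\hat p_r^{+}[f^{+}]-\hat p_r^{-}[f^{-}]$, which shows plainly that the antipodal map acts by $+1$ on the even component and by $-1$ on the odd one. However, it rests on a heavier result than the statement needs. It is also specific to the antipodal map and does not extend to a general orthogonal $Q$, which need not act diagonally on $\Pi_r^{+}(\mathbb{S}^2)\oplus\Pi_r^{-}(\mathbb{S}^2)$.
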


\begin{proof}
Let
\[
\hat{p}_r[f]=\sum_{\ell=0}^r\sum_{m=-\ell}^{\ell} c_{\ell,m}u_{\ell,m}, \quad c_{\ell,m}\in\mathbb{R}.
\]
By the parity relation~\eqref{parcond} 
we obtain
\[
\left(\hat{p}_r[f]\right)^\star(\boldsymbol{x})
=
\hat{p}_r[f](-\boldsymbol{x})
=
\sum_{\ell=0}^r\sum_{m=-\ell}^{\ell} (-1)^\ell c_{\ell,m}u_{\ell,m}(\boldsymbol{x})\in\Pi_r\left(\mathbb{S}^2\right).
\]
We show that $\left(\hat{p}_r[f]\right)^\star$ is the interpolation--regression approximant associated with $f^\star$. Since $\hat{p}_r[f]$ satisfies the interpolation conditions on $X_M$, we have
\[
\hat{p}_r[f]\left(\boldsymbol{x}_j^{\prime}\right)=f\left(\boldsymbol{x}_j^{\prime}\right), \quad \boldsymbol{x}_j^{\prime}\in X_M.
\]
As $X_M$ is antipodally symmetric, we get
\[
\left(\hat{p}_r[f]\right)^\star\left(\boldsymbol{x}_j^{\prime}\right)
=
\hat{p}_r[f]\left(-\boldsymbol{x}_j^{\prime}\right)
=
f\left(-\boldsymbol{x}_j^{\prime}\right)
=
f^\star\left(\boldsymbol{x}_j^{\prime}\right).
\]
Thus $\left(\hat{p}_r[f]\right)^\star$ satisfies the interpolation constraints associated with $f^\star$.

Now let $q\in\Pi_r\left(\mathbb{S}^2\right)$ be any polynomial such that
\[
q\left(\boldsymbol{x}_j^{\prime}\right)=f^\star\left(\boldsymbol{x}_j^{\prime}\right),
\qquad \boldsymbol{x}_j^{\prime}\in X_M.
\]
Since $X_M$ is antipodally symmetric, it follows that
\[q\left(-\boldsymbol{x}_j^{\prime}\right)=f^\star\left(-\boldsymbol{x}_j^{\prime}\right),
\qquad \boldsymbol{x}_j^{\prime}\in X_M,\]
and then
\[
q^\star\left(\boldsymbol{x}_j^{\prime}\right)=f\left(\boldsymbol{x}_j^{\prime}\right),
\qquad \boldsymbol{x}_j^{\prime}\in X_M.
\]
By the minimality of $\hat{p}_r[f]$, we obtain
\[
\sum_{\boldsymbol{x}_i\in X_N}
\left|
\hat{p}_r[f]\left(\boldsymbol{x}_i\right)-f\left(\boldsymbol{x}_i\right)
\right|^2
\le
\sum_{\boldsymbol{x}_i\in X_N}
\left|
q^\star\left(\boldsymbol{x}_i\right)-f\left(\boldsymbol{x}_i\right)
\right|^2.
\]
Using the antipodal symmetry of $X_N$, we can rewrite the inequality as
\[
\sum_{\boldsymbol{x}_i\in X_N}
\left|
\left(\hat{p}_r[f]\right)^\star\left(\boldsymbol{x}_i\right)-f^\star\left(\boldsymbol{x}_i\right)
\right|^2
\le
\sum_{\boldsymbol{x}_i\in X_N}
\left|
q\left(\boldsymbol{x}_i\right)-f^\star\left(\boldsymbol{x}_i\right)
\right|^2.
\]
Therefore $\left(\hat{p}_r[f]\right)^\star$ is a minimizer of the constrained least squares problem associated with $f^\star$. By uniqueness, we conclude that
\[
\hat{p}_r\left[f^\star\right]=\left(\hat{p}_r[f]\right)^\star.
\]
This proves~\eqref{eq:equivariance_antipodal}.
\end{proof}

\begin{corollary}
The following conditions hold
\begin{itemize}
    \item[(i)] If $f^\star=-f$ on $X_N$, then $\hat{p}_r[f]$ is odd. 
    \item[(ii)] If $f^\star=f$ on $X_N$, then $\hat{p}_r[f]$ is even.
\end{itemize}
\end{corollary}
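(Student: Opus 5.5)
The plan is to combine the antipodal equivariance of Theorem~\ref{thm:antipodal-equivariance} with the observation that the approximant $\hat p_r[f]$ depends on $f$ only through its values on $X_N$. Two facts drive the argument. First, the KKT system \eqref{eq:kkt_sphere} has right-hand side $V_{N,r}^\top\boldsymbol f_N$ and $\boldsymbol f_M$, and $X_M\subset X_N$. Second, by Theorem~\ref{thm1}, $\mathcal K$ is nonsingular, so $\boldsymbol c$ is a linear function of $\boldsymbol f_N$ alone.

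The first step is to show that if $g=h$ on $X_N$, then $\hat p_r[g]=\hat p_r[h]$, and that $\hat p_r$ is linear in the data. Both follow immediately from the KKT system. Equivalently, one can argue from Theorem~\ref{thmorth}(ii): the conditions defining $\hat p_r$ involve only values at nodes of $X_N$.

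The second step treats case (i). If $f^\star=-f$ on $X_N$, then $f^\star$ and $-f$ agree on $X_N$. Since $X_N$ is antipodally symmetric, $f^\star$ is well defined on $X_N$ by its values at nodes. The first step and linearity give $\hat p_r[f^\star]=\hat p_r[-f]=-\hat p_r[f]$. Theorem~\ref{thm:antipodal-equivariance} gives $\hat p_r[f^\star]=(\hat p_r[f])^\star$. Combining the two, $\hat p_r[f](-\boldsymbol x)=-\hat p_r[f](\boldsymbol x)$ for all $\boldsymbol x\in\mathbb S^2$, so $\hat p_r[f]\in\Pi_r^-(\mathbb S^2)$. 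Case (ii) is identical with the signs changed: here $(\hat p_r[f])^\star=\hat p_r[f]$.

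The only delicate point is that the hypothesis is stated on $X_N$ rather than on all of $\mathbb S^2$. It is handled by the first step, so I expect no real obstacle.

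As an alternative route, one could use Theorem~\ref{thm:parity_decoupling}. In case (i), $f^+=0$ on $X_N$, so the even subproblem has zero data, and its unique minimizer is $0$, giving $\hat p_r^+[f^+]=0$.
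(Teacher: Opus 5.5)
Your proof is correct and follows the same route as the paper: combine the equivariance $\hat p_r[f^\star]=(\hat p_r[f])^\star$ with $\hat p_r[f^\star]=\hat p_r[-f]=-\hat p_r[f]$. The only difference is how that second identity is justified. You derive both the sign identity and the dependence on values at $X_N$ alone from linearity of the KKT solution map. The paper proves $\hat p_r[-f]=-\hat p_r[f]$ by a uniqueness-of-minimizer argument and uses the node-dependence implicitly, so your Step~1 makes explicit a step the paper leaves unstated.
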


\begin{proof}
We prove only $(i)$, since $(ii)$ follows analogously.

Assume that $f^\star=-f$ on $X_N$. We first observe that the polynomial $-\hat{p}_r[f]$ satisfies the interpolation conditions associated with $-f$ on $X_M$. Indeed
\[
-\hat{p}_r[f]\left(\boldsymbol{x}_j^{\prime}\right) = -f\left(\boldsymbol{x}_j^{\prime}\right), \quad \boldsymbol{x}_j^{\prime}\in X_M.
\]
Moreover, we have
\[
\sum_{\boldsymbol{x}_i\in X_N}
\left|
-\hat{p}_r[f]\left(\boldsymbol{x}_i\right)-\left(-f\left(\boldsymbol{x}_i\right)\right)
\right|^2
=
\sum_{\boldsymbol{x}_i\in X_N}
\left|
\hat{p}_r[f]\left(\boldsymbol{x}_i\right)-f\left(\boldsymbol{x}_i\right)
\right|^2.
\]
Therefore, $-\hat{p}_r[f]$ is a minimizer for the interpolation--regression problem associated with $-f$. By uniqueness of the approximant, we conclude that
\begin{equation}\label{condparf}
    \hat{p}_r[-f]=-\hat{p}_r[f].
\end{equation}
Since $f^\star=-f$ on $X_N$, it follows that
\[
\hat{p}_r\left[f^\star\right]=\hat{p}_r[-f].
\]
Then, combining~\eqref{eq:equivariance_antipodal} and~\eqref{condparf}, 
we obtain
\[
\left(\hat{p}_r[f]\right)^\star
=
\hat{p}_r\left[f^\star\right]
=
\hat{p}_r[-f]
=
-\hat{p}_r[f].
\]
Hence $\hat{p}_r[f]$ is odd.
\end{proof}

The antipodal symmetry considered above is a particular instance of a more general equivariance property. More generally, the interpolation--regression operator is equivariant under orthogonal transformations of the sphere that preserve both the sampling set and the interpolation set. For any $Q\in\mathbb{R}^{3\times 3}$, we denote by
\begin{eqnarray*}
    QX_N&:=&\left\{Q\boldsymbol{x}_i\,:\, i=1,\dots,N\right\},\\
    QX_M&:=&\left\{Q\boldsymbol{x}_j^{\prime}\,:\, j=1,\dots,M\right\}.
\end{eqnarray*}

\begin{theorem}\label{thm:group-equivariance}
Let $Q\in\mathbb{R}^{3\times 3}$ be an orthogonal matrix such that
\[
QX_N = X_N, \quad QX_M = X_M.
\]
For any function $f:\mathbb{S}^2\to\mathbb{R}$, define
\begin{equation}
    \label{fQepQ}
    f^{Q}:=f\circ Q^{-1}.
\end{equation}
Then
\begin{equation}\label{eq:equivariance_group}
\hat{p}_r\left[f^{Q}\right]=\left(\hat{p}_r[f]\right)^{Q}.
\end{equation}
In other words, the interpolation--regression operator on the sphere is equivariant with respect to every orthogonal symmetry preserving both $X_N$ and $X_M$.
\end{theorem}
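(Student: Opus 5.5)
The plan is to repeat the argument of Theorem~\ref{thm:antipodal-equivariance}, replacing the antipodal map $\boldsymbol{x}\mapsto-\boldsymbol{x}$ by $\boldsymbol{x}\mapsto Q^{-1}\boldsymbol{x}$, and to conclude by the uniqueness of the constrained minimizer (the Remark following Theorem~\ref{thm1}). For a function $g$ on $\mathbb{S}^2$ we write $g^{Q}=g\circ Q^{-1}$, as in~\eqref{fQepQ}.

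\textbf{Step 1: $\Pi_r(\mathbb{S}^2)$ is invariant under composition with an orthogonal map.} Spherical polynomials of degree at most $r$ are restrictions to $\mathbb{S}^2$ of algebraic polynomials of total degree at most $r$ in $(x,y,z)$. A linear change of variables preserves total degree, and $Q^{-1}$ maps $\mathbb{S}^2$ onto itself. Hence $p\in\Pi_r(\mathbb{S}^2)$ implies $p^{Q}\in\Pi_r(\mathbb{S}^2)$. Since $(p^{Q})^{Q^{-1}}=p$, the map $p\mapsto p^{Q}$ is a linear bijection of $\Pi_r(\mathbb{S}^2)$. Unlike the antipodal case, this step cannot be read off from a simple parity formula for the basis~\eqref{real_armsf}. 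It is the one point that uses structure of the sphere, and we expect it to be the only real obstacle. The polynomial-restriction description is the cleanest route. Alternatively, one can use that each harmonic space of degree $\ell$ is rotation invariant.

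\textbf{Step 2: interpolation constraints.} Since $QX_M=X_M$ and $Q$ is injective, $Q^{-1}$ permutes $X_M$. Hence $Q^{-1}\boldsymbol{x}_j'\in X_M$ for every $\boldsymbol{x}_j'\in X_M$, and
\[
(\hat p_r[f])^{Q}(\boldsymbol{x}_j')=\hat p_r[f](Q^{-1}\boldsymbol{x}_j')=f(Q^{-1}\boldsymbol{x}_j')=f^{Q}(\boldsymbol{x}_j').
\]
So $(\hat p_r[f])^{Q}$ belongs to $\mathcal{A}_{r,M}(f^{Q})$. Conversely, if $q\in\mathcal{A}_{r,M}(f^{Q})$, the same permutation argument applied with $Q$ shows that $q^{Q^{-1}}=q\circ Q\in\mathcal{A}_{r,M}(f)$.

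\textbf{Step 3: the functional is invariant.} Since $Q^{-1}$ permutes $X_N$, reindexing the sum gives, for any $p$ and $g$,
\[
\sum_{\boldsymbol{x}_i\in X_N}\left|p^{Q}(\boldsymbol{x}_i)-g^{Q}(\boldsymbol{x}_i)\right|^2=\sum_{\boldsymbol{x}_i\in X_N}\left|p(\boldsymbol{x}_i)-g(\boldsymbol{x}_i)\right|^2 .
\]
Now take any $q\in\mathcal{A}_{r,M}(f^{Q})$. By Step 2, $q\circ Q$ is admissible for $f$, so minimality of $\hat p_r[f]$ gives $\mathcal{J}(\hat p_r[f];f)\le \mathcal{J}(q\circ Q;f)$, with $\mathcal{J}$ as in~\eqref{fun_J}. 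Applying the invariance above to both sides yields $\mathcal{J}((\hat p_r[f])^{Q};f^{Q})\le \mathcal{J}(q;f^{Q})$. Thus $(\hat p_r[f])^{Q}$ minimizes the constrained problem for $f^{Q}$. The rank hypotheses~\eqref{eq:rank_condition_section} and~\eqref{rankM} make this minimizer unique, which gives~\eqref{eq:equivariance_group}.
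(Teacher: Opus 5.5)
Your proposal is correct and follows essentially the same route as the paper's proof: you transfer the interpolation constraints because $Q^{-1}$ permutes $X_M$, use that $q\circ Q$ is admissible for $f$ to invoke the minimality of $\hat p_r[f]$, reindex the discrete sums because $Q$ permutes $X_N$, and conclude by uniqueness. The only difference is that your Step 1 justifies the invariance of $\Pi_r(\mathbb{S}^2)$ under $p\mapsto p\circ Q^{-1}$ (linear changes of variables preserve total degree), which the paper simply asserts.
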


\begin{proof}
Since $Q$ is orthogonal, it preserves the Euclidean norm, and therefore maps $\mathbb{S}^2$ onto itself. In particular, if $p\in\Pi_r\left(\mathbb{S}^2\right)$, then
\[
p^{Q}=p\circ Q^{-1}=p\circ Q^{\top}
\]
still belongs to $\Pi_r\left(\mathbb{S}^2\right)$.  
We show that $\left(\hat{p}_r[f]\right)^{Q}$ coincides with the interpolation--regression approximant associated with $f^{Q}$. Since $\hat{p}_r[f]$ satisfies the interpolation conditions on $X_M$, for every $\boldsymbol{x}_j^{\prime}\in X_M$ we have
\[
\hat{p}_r[f]\left(\boldsymbol{x}_j^{\prime}\right)=f\left(\boldsymbol{x}_j^{\prime}\right).
\]
Since $QX_M=X_M$, it follows that $Q^\top\boldsymbol{x}_j^{\prime}\in X_M$, and therefore
\[
\left(\hat{p}_r[f]\right)^{Q}\left(\boldsymbol{x}_j^{\prime}\right)
=
\hat{p}_r[f]\left(Q^\top\boldsymbol{x}_j^{\prime}\right)
=
f\left(Q^\top\boldsymbol{x}_j^{\prime}\right)
=
f^{Q}\left(\boldsymbol{x}_j^{\prime}\right).
\]

Let $p\in\Pi_r\left(\mathbb{S}^2\right)$ be any polynomial such that
\begin{equation}\label{dcsss}
p\left(\boldsymbol{x}_j^{\prime}\right)=f^{Q}\left(\boldsymbol{x}_j^{\prime}\right),
\qquad \boldsymbol{x}_j^{\prime}\in X_M.
\end{equation}
Hence, we have
\[
(p\circ Q)\left(\boldsymbol{x}_j^{\prime}\right)
=
p\left(Q\boldsymbol{x}_j^{\prime}\right)
=
f^{Q}\left(Q\boldsymbol{x}_j^{\prime}\right)
=
f\left(\boldsymbol{x}_j^{\prime}\right).
\]
Thus $p\circ Q$ satisfies the interpolation constraints associated with $f$. By the minimality of $\hat{p}_r[f]$, we obtain
\begin{equation}\label{new1}
    \sum_{\boldsymbol{x}_i\in X_N}
\left|
\hat{p}_r[f]\left(\boldsymbol{x}_i\right)-f\left(\boldsymbol{x}_i\right)
\right|^2
\le
\sum_{\boldsymbol{x}_i\in X_N}
\left|
(p\circ Q)\left(\boldsymbol{x}_i\right)-f\left(\boldsymbol{x}_i\right)
\right|^2.
\end{equation}
Since $QX_N=X_N$, we can reindex the discrete sums by setting $\boldsymbol{y}_i=Q\boldsymbol{x}_i\in X_N$, and obtain
\begin{eqnarray}\notag
    \sum_{\boldsymbol{x}_i\in X_N}
\left|
\hat{p}_r[f]\left(\boldsymbol{x}_i\right)-f\left(\boldsymbol{x}_i\right)
\right|^2
&=&
\sum_{\boldsymbol{y}_i\in X_N}
\left|
\hat{p}_r[f]\left(Q^{-1}\boldsymbol{y}_i\right)-f\left(Q^{-1}\boldsymbol{y}_i\right)
\right|^2\\&=& \sum_{\boldsymbol{y}_i\in X_N}
\left|
\left(\hat{p}_r[f]\right)^{Q}\left(\boldsymbol{y}_i\right)-f^{Q}\left(\boldsymbol{y}_i\right)
\right|^2.\label{rfev}
\end{eqnarray} 
Similarly we have
\begin{equation}\label{rfev1}
    \sum_{\boldsymbol{x}_i\in X_N}
\left|
(p\circ Q)\left(\boldsymbol{x}_i\right)-f\left(\boldsymbol{x}_i\right)
\right|^2
=
\sum_{\boldsymbol{y}_i\in X_N}
\left|
p\left(\boldsymbol{y}_i\right)-f^{Q}\left(\boldsymbol{y}_i\right)
\right|^2.
\end{equation}
Therefore, by substituting~\eqref{rfev} and~\eqref{rfev1} into~\eqref{new1}, we have
\[
\sum_{\boldsymbol{x}_i\in X_N}
\left|
\left(\hat{p}_r[f]\right)^{Q}\left(\boldsymbol{x}_i\right)-f^{Q}\left(\boldsymbol{x}_i\right)
\right|^2
\le
\sum_{\boldsymbol{x}_i\in X_N}
\left|
p\left(\boldsymbol{x}_i\right)-f^{Q}\left(\boldsymbol{x}_i\right)
\right|^2.
\]
Hence $\left(\hat{p}_r[f]\right)^{Q}$ is a minimizer of the constrained least squares problem associated with $f^{Q}$. By uniqueness of the interpolation--regression approximant, we conclude that
\[
\hat{p}_r\left[f^{Q}\right]=\left(\hat{p}_r[f]\right)^{Q}.
\]
\end{proof}

\begin{remark}
In Theorem~\ref{thm:group-equivariance}, we show that the interpolation--regression operator is equivariant with respect to orthogonal transformations preserving the node sets. This property reflects the fact that the approximation process is compatible with the symmetry of the underlying discrete configuration, and therefore preserves the geometric invariance of the problem.
\end{remark}

\section{The case of spherical designs}
\label{sec:designs}

In this section, we consider the case in which the sampling set satisfies an exact quadrature property on the sphere.

A finite set $X_N=\left\{\boldsymbol{x}_1,\dots,\boldsymbol{x}_N\right\}\subset \mathbb{S}^2$ is called a \textit{spherical $k$-design}~\cite{bajnok1992construction,zhou2018spherical} if
\[
\frac{1}{N}\sum_{i=1}^N p(\boldsymbol{x}_i)
=
\frac{1}{4\pi}\int_{\mathbb{S}^2} p(\boldsymbol{x}) d\omega(\boldsymbol{x}),
\qquad
\forall p\in \Pi_k\left(\mathbb{S}^2\right).
\]
Throughout this section, we assume that $X_N \subset \mathbb{S}^2$ is a spherical $k$-design with $k \ge 2r$.

The next result will be useful in the following analysis.
\begin{theorem}\label{thm:design_orthogonality}
For any
$p,q \in \Pi_r\left(\mathbb{S}^2\right)$, we have
\begin{equation}
\frac{1}{N}\sum_{\boldsymbol{x}_i\in X_N} p\left(\boldsymbol{x}_i\right)q\left(\boldsymbol{x}_i\right)
=
\frac{1}{4\pi}\int_{\mathbb{S}^2} p(\boldsymbol{x})q(\boldsymbol{x})d\omega(\boldsymbol{x}).
\label{eq:design-pq}
\end{equation}
Equivalently
\begin{equation}
\langle p,q\rangle_{X_N}
=
\frac{N}{4\pi}\langle p,q\rangle_{L^2\left(\mathbb{S}^2\right)}.
\label{eq:inner-product-design}
\end{equation}
\end{theorem}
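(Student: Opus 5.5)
The argument is a direct application of the design property to the product $pq$. The only facts needed are that $pq$ is again a spherical polynomial and that its degree is small enough for the quadrature to be exact.

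First, I check the degree bound. If $p,q\in\Pi_r\left(\mathbb{S}^2\right)$, then $pq\in\Pi_{2r}\left(\mathbb{S}^2\right)$. I would justify this by viewing spherical polynomials of degree at most $r$ as restrictions to $\mathbb{S}^2$ of algebraic polynomials in $(x,y,z)$ of total degree at most $r$. The product of two such restrictions is the restriction of a polynomial of total degree at most $2r$. Since $k\ge 2r$, we have $\Pi_{2r}\left(\mathbb{S}^2\right)\subseteq\Pi_k\left(\mathbb{S}^2\right)$, so $pq\in\Pi_k\left(\mathbb{S}^2\right)$.

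Second, I apply the defining identity of a spherical $k$-design with the polynomial $pq$. This gives
\[
\frac{1}{N}\sum_{i=1}^N p\left(\boldsymbol{x}_i\right)q\left(\boldsymbol{x}_i\right)=\frac{1}{4\pi}\int_{\mathbb{S}^2}p(\boldsymbol{x})q(\boldsymbol{x})\,d\omega(\boldsymbol{x}),
\]
which is exactly \eqref{eq:design-pq}. Multiplying both sides by $N$ and recalling the definition \eqref{innprod} of $\langle\cdot,\cdot\rangle_{X_N}$ and the $L^2\left(\mathbb{S}^2\right)$ inner product in \eqref{ortcond1} yields \eqref{eq:inner-product-design}.

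The only point requiring care is the closure property $\Pi_r\cdot\Pi_r\subseteq\Pi_{2r}$, because the paper introduces $\Pi_r\left(\mathbb{S}^2\right)$ through the real spherical harmonics basis rather than through ambient polynomials. I would handle this by invoking the standard identification of $\operatorname{span}\{u_{\ell,m}:\ell\le r\}$ with restrictions of polynomials of degree at most $r$ in $\mathbb{R}^3$. Alternatively, it suffices to note that the product of spherical harmonics of degrees $\ell$ and $\ell'$ expands in harmonics of degree at most $\ell+\ell'$, which is the Clebsch--Gordan fact. With that in place, the proof is immediate.
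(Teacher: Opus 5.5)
Your proposal is correct and follows the paper's proof: since $k\ge 2r$, you note $pq\in\Pi_{2r}\left(\mathbb{S}^2\right)\subseteq\Pi_k\left(\mathbb{S}^2\right)$, apply the design quadrature identity to $pq$, and multiply by $N$ to obtain the inner-product form. The paper takes the closure $\Pi_r\cdot\Pi_r\subseteq\Pi_{2r}$ for granted, so your explicit justification of it (via restrictions of ambient polynomials or the Clebsch--Gordan expansion) is a harmless and welcome addition.
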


\begin{proof}
Let $p,q\in \Pi_r\left(\mathbb{S}^2\right)$. Then
\[
pq \in \Pi_{2r}\left(\mathbb{S}^2\right).
\]
Since $k\ge 2r$ and $X_N$ is a spherical $k$-design, the quadrature property applied to $pq$ gives
\[
\sum_{i=1}^N p\left(\boldsymbol{x}_i\right)q\left(\boldsymbol{x}_i\right)
=
\frac{N}{4\pi}\int_{\mathbb{S}^2} p(\boldsymbol{x})q(\boldsymbol{x}) d\omega(\boldsymbol{x}).
\]
The identity~\eqref{eq:inner-product-design} immediately follows from the definition of $\langle\cdot,\cdot\rangle_{X_N}$.
\end{proof}

As an immediate consequence, the discrete Gram matrix admits a particularly simple form.

\begin{corollary}
\label{cor:design_gram}
Let $\mathcal{B}_r$ be the basis of real spherical harmonics defined in~\eqref{real_armsfbas}. Then
\[
V_{N,r}^\top V_{N,r}
=
\frac{N}{4\pi} I.
\]
\end{corollary}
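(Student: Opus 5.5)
The plan is to compute the entries of the Gram matrix directly and then apply Theorem~\ref{thm:design_orthogonality} together with the $L^2$-orthonormality of the real spherical harmonics. By the definition~\eqref{vander} of the Vandermonde matrix with $b_j$ running over $\mathcal{B}_r$, the $(j,k)$ entry of $V_{N,r}^\top V_{N,r}$ is
\[
\left(V_{N,r}^\top V_{N,r}\right)_{jk}=\sum_{i=1}^N b_j\left(\boldsymbol{x}_i\right)b_k\left(\boldsymbol{x}_i\right)=\langle b_j,b_k\rangle_{X_N}.
\]
So the matrix identity reduces to evaluating the discrete inner product~\eqref{innprod} on pairs of basis elements.

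Next, observe that each $b_j=u_{\ell,m}$ with $\ell\le r$ lies in $\Pi_r\left(\mathbb{S}^2\right)$. Since $X_N$ is a spherical $k$-design with $k\ge 2r$, identity~\eqref{eq:inner-product-design} of Theorem~\ref{thm:design_orthogonality} applies to every pair $b_j,b_k$. This gives
\[
\langle b_j,b_k\rangle_{X_N}=\frac{N}{4\pi}\langle b_j,b_k\rangle_{L^2\left(\mathbb{S}^2\right)}.
\]
By the orthonormality relation~\eqref{ortcond1}, the $L^2$ inner product equals $\delta_{jk}$. Hence every diagonal entry equals $N/(4\pi)$ and every off-diagonal entry vanishes, which is the claim.

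There is no real obstacle here. The only point needing care is consistency of normalization. The design property is stated with the normalized measure $\frac{1}{4\pi}d\omega$, whereas orthonormality~\eqref{ortcond1} is with respect to the unnormalized $d\omega$. This mismatch is exactly what produces the factor $N/(4\pi)$, and I would check that it is carried through correctly. I would also note that the ordering of the basis (even harmonics first) is irrelevant, since the result is a scalar multiple of the identity.
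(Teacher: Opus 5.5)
Your proposal is correct and matches the paper's proof. The paper likewise writes the entries of $V_{N,r}^\top V_{N,r}$ as $\langle u_{\ell,m},u_{\ell',m'}\rangle_{X_N}$, uses Theorem~\ref{thm:design_orthogonality} to turn them into $\frac{N}{4\pi}$ times the $L^2$ inner products, and concludes from the orthonormality relation~\eqref{ortcond1}, with the normalization factor arising exactly as you describe.
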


\begin{proof}
The entries of $V_{N,r}^\top V_{N,r}$ are
\begin{equation}\label{fisaap}
    \left[V_{N,r}^\top V_{N,r}\right]_{(\ell,m),(\ell',m')}
=
\sum_{i=1}^N
u_{\ell,m}\left(\boldsymbol{x}_i\right)
u_{\ell',m'}\left(\boldsymbol{x}_i\right)=\langle u_{\ell,m},u_{\ell',m'}\rangle_{X_N}.
\end{equation}
Combining~\eqref{eq:inner-product-design} and~\eqref{fisaap}, we get
\[
\left[V_{N,r}^\top V_{N,r}\right]_{(\ell,m),(\ell',m')}
=
\frac{N}{4\pi}\langle u_{\ell,m},u_{\ell',m'}\rangle_{L^2\left(\mathbb{S}^2\right)}.
\]
Since the functions $u_{\ell,m}$ satisfy~\eqref{ortcond1}, we have
\[
V_{N,r}^\top V_{N,r}
=
\frac{N}{4\pi} I.
\]
\end{proof}

\begin{remark}
In the spherical $k$-design setting all singular values of
$V_{N,r}$ coincide. In particular, $V_{N,r}$ is optimally conditioned in the
spectral norm. Moreover, the KKT matrix can be written as
\begin{equation}\label{eq:KKT_design_form}
\mathcal{K}
=
\begin{bmatrix}
\alpha I & V_{M,r}^\top \\
V_{M,r} & 0
\end{bmatrix},
\qquad
\alpha=\frac{N}{4\pi}.
\end{equation}
The Schur complement of the leading block $\alpha I$ is
\[
-\frac{1}{\alpha}V_{M,r}V_{M,r}^\top.
\]
Equivalently, with the notation used in~\eqref{ShurComp}, it results
\begin{equation}
    \label{eq:Schur_design}
    S=\frac{1}{\alpha}V_{M,r}V_{M,r}^\top.
\end{equation}
\end{remark}

The next result gives an explicit description of the spectrum of the KKT matrix. 

\begin{theorem}\label{thm:KKT_spectrum_design}
Let $X_M \subset X_N$ be an interpolation subset satisfying~\eqref{rankM}, and let
\[
\sigma_1\left(V_{M,r}\right)\geq \sigma_2\left(V_{M,r}\right)\geq \cdots \geq \sigma_M\left(V_{M,r}\right)>0
\]
be the singular values of $V_{M,r}$. Then the following statements hold.

\begin{itemize}
\item[(i)] The spectrum of the KKT matrix
\begin{equation}\label{matKaa}
    \mathcal K=
\begin{bmatrix}
\alpha I & V_{M,r}^\top\\
V_{M,r} & 0
\end{bmatrix}
\end{equation}
is given by
\[
\operatorname{spec}(\mathcal K)
=
\{\alpha\}^{R-M}
\cup
\left\{
\lambda_j^+,\lambda_j^- : j=1,\dots,M
\right\},
\]
where
\begin{equation}\label{eq:KKT_eigs_design}
\lambda_j^\pm
=
\frac{\alpha\pm\sqrt{\alpha^2+4\alpha\mu_j}}{2},
\qquad j=1,\dots,M,
\end{equation}
and $\mu_j$, $j=1,\dots,M$ are the eigenvalues of $S$.
In particular, $\lambda_j^+>0$ and $\lambda_j^-<0$ for any $j=1,\dots,M$.

\item[(ii)] The singular values of $\mathcal K$ are
\[
\{\alpha\}^{R-M}
\cup
\left\{
\frac{\alpha+\sqrt{\alpha^2+4\sigma_j\left(V_{M,r}\right)^2}}{2},
\frac{\sqrt{\alpha^2+4\sigma_j\left(V_{M,r}\right)^2}-\alpha}{2}
:\ j=1,\dots,M
\right\}.
\]

\item[(iii)] The condition number of $\mathcal K$ is
\begin{equation}\label{eq:KKT_cond_design}
\kappa_2(\mathcal K)
=
\frac{
\frac{\alpha+\sqrt{\alpha^2+4\sigma_{\max}\left(V_{M,r}\right)^2}}{2}
}{
\min\left\{
\alpha,
\frac{\sqrt{\alpha^2+4\sigma_{\min}\left(V_{M,r}\right)^2}-\alpha}{2}
\right\}
}.
\end{equation}
\end{itemize}
\end{theorem}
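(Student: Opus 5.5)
Since $\mathcal K$ in~\eqref{matKaa} is real symmetric, we diagonalize it by using the singular value decomposition of $B:=V_{M,r}$. Write $B=U\Sigma W^\top$ with $U\in\mathbb{R}^{M\times M}$ and $W\in\mathbb{R}^{R\times R}$ orthogonal, and $\Sigma=[\,D\;\;0\,]$ with $D=\operatorname{diag}(\sigma_1,\dots,\sigma_M)$. Here $D$ is invertible by~\eqref{rankM}. Set $\mathcal{Q}=\operatorname{diag}(W,U)$, which is orthogonal. Then
\[
\mathcal{Q}^\top\mathcal K\mathcal{Q}=\begin{bmatrix}\alpha I_R & \Sigma^\top\\ \Sigma & 0\end{bmatrix},
\]
so $\mathcal K$ is orthogonally similar to this matrix. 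After a permutation (again an orthogonal similarity), the matrix splits as a direct sum. It consists of $R-M$ scalar blocks $[\alpha]$, corresponding to the columns of $W$ spanning $\ker B$, together with $M$ blocks of size $2\times 2$:
\[
\begin{bmatrix}\alpha & \sigma_j\\ \sigma_j & 0\end{bmatrix},\qquad j=1,\dots,M.
\]
Each $2\times 2$ block has characteristic polynomial $\lambda^2-\alpha\lambda-\sigma_j^2$, with roots $\tfrac{\alpha\pm\sqrt{\alpha^2+4\sigma_j^2}}{2}$.

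To match the form~\eqref{eq:KKT_eigs_design}, we note that by~\eqref{eq:Schur_design} the eigenvalues of $S=\tfrac1\alpha BB^\top=\tfrac1\alpha UD^2U^\top$ are $\mu_j=\sigma_j^2/\alpha$. Hence $4\sigma_j^2=4\alpha\mu_j$, which gives exactly the expressions~\eqref{eq:KKT_eigs_design}. Because $\sigma_j>0$, we have $\sqrt{\alpha^2+4\sigma_j^2}>\alpha$. This yields $\lambda_j^+>0$ and $\lambda_j^-<0$, consistent with the inertia in Theorem~\ref{thm1}. This proves (i).

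For (ii), we use that $\mathcal K$ is symmetric, so its singular values are the absolute values of its eigenvalues. The value $\alpha$ is positive. The eigenvalue $\lambda_j^+$ is already positive, and $|\lambda_j^-|=\tfrac{\sqrt{\alpha^2+4\sigma_j^2}-\alpha}{2}$, which is the stated list.

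For (iii), we compute $\kappa_2(\mathcal K)=s_{\max}/s_{\min}$. The function $t\mapsto \tfrac{\alpha+\sqrt{\alpha^2+4t^2}}{2}$ is increasing in $t\geq0$ and is at least $\alpha$. Therefore the largest singular value is $\tfrac{\alpha+\sqrt{\alpha^2+4\sigma_{\max}^2}}{2}$. This value dominates both $\alpha$ and every $|\lambda_j^-|$, since $|\lambda_j^-|<\lambda_j^+$. The function $t\mapsto \tfrac{\sqrt{\alpha^2+4t^2}-\alpha}{2}$ is also increasing, so the smallest of the $|\lambda_j^-|$ is attained at $\sigma_{\min}$. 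The smallest singular value is then the minimum of $\alpha$ and this quantity. One caveat concerns the degenerate case $R=M$, where no block $[\alpha]$ appears. The standing assumption $m<r$ gives $R>M$, so this case does not occur and $\alpha$ is genuinely in the spectrum.

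The main obstacle is purely bookkeeping: justifying the reduction to $2\times2$ blocks cleanly. To do this, we exhibit explicit eigenvectors. For $\boldsymbol{w}\in\ker B$, the vector $[\boldsymbol{w};\boldsymbol{0}]$ is an eigenvector with eigenvalue $\alpha$; this gives $R-M$ independent eigenvectors. Let $W=[\,\boldsymbol{w}_1,\dots,\boldsymbol{w}_R\,]$ and $U=[\,\boldsymbol{u}_1,\dots,\boldsymbol{u}_M\,]$, so that $B\boldsymbol{w}_j=\sigma_j\boldsymbol{u}_j$ and $B^\top\boldsymbol{u}_j=\sigma_j\boldsymbol{w}_j$ for $j\le M$. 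For each such pair, the vectors $[\boldsymbol{w}_j;\boldsymbol{0}]$ and $[\boldsymbol{0};\boldsymbol{u}_j]$ span a $\mathcal K$-invariant plane on which $\mathcal K$ acts by the $2\times2$ block above. These planes are mutually orthogonal and orthogonal to the kernel eigenvectors, so together they account for all $R+M$ dimensions.
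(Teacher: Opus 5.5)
Your proof is correct and is essentially the paper's argument. The paper takes orthonormal eigenvectors $\boldsymbol{u}_j$ of $S$ and sets $\boldsymbol{z}_j=V_{M,r}^\top\boldsymbol{u}_j/\sqrt{\alpha\mu_j}$, which are exactly your right singular vectors $\boldsymbol{w}_j$, and then uses the same splitting into $\ker V_{M,r}\times\{\boldsymbol{0}\}$ and the invariant planes $\operatorname{span}\{[\boldsymbol{z}_j;\boldsymbol{0}],[\boldsymbol{0};\boldsymbol{u}_j]\}$ carrying the $2\times 2$ block with off-diagonal entry $\sqrt{\alpha\mu_j}=\sigma_j$; your version additionally makes explicit the identity $\alpha\mu_j=\sigma_j(V_{M,r})^2$ and the need for $R>M$ (from $m<r$) so that $\alpha$ belongs in the minimum in (iii), both of which the paper uses tacitly.
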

\begin{proof}
Since
\[
\operatorname{rank}\left(V_{M,r}\right)=M,
\]
the matrix
\[
S=\frac{1}{\alpha}V_{M,r}V_{M,r}^\top\in\mathbb{R}^{M\times M}
\]
is symmetric positive definite. Hence, by the spectral theorem, there exists an orthonormal basis 
\[
\left\{\boldsymbol{u}_1,\dots,\boldsymbol{u}_M\right\}
\]
of eigenvectors of $S$, namely
\[
S\boldsymbol{u}_j=\mu_j\boldsymbol{u}_j,
\qquad j=1,\dots,M.
\]
Then, by~\eqref{eq:Schur_design}, we get
\begin{equation}\label{newnews}
    V_{M,r}V_{M,r}^\top \boldsymbol{u}_j=\alpha\mu_j \boldsymbol{u}_j,
\qquad j=1,\dots,M.
\end{equation}
Since $S$ is positive definite, we have
\[
\mu_j>0,
\qquad j=1,\dots,M.
\]
Then, for every $j=1,\dots,M$, we set
\begin{equation}\label{defz}
\boldsymbol{z}_j
=
\frac{1}{\sqrt{\alpha\mu_j}}\,V_{M,r}^\top \boldsymbol{u}_j
\in\mathbb{R}^R.
\end{equation}
Hence, by~\eqref{newnews}, we have
\begin{equation}\label{Vz}
    V_{M,r}\boldsymbol{z}_j
=
\frac{1}{\sqrt{\alpha\mu_j}}\,V_{M,r}V_{M,r}^\top \boldsymbol{u}_j
=
\sqrt{\alpha\mu_j}\,\boldsymbol{u}_j
\end{equation}
and
\begin{eqnarray*}
\left\langle \boldsymbol{z}_i,\boldsymbol{z}_j\right\rangle
&=&
\frac{1}{\sqrt{\alpha\mu_i}\sqrt{\alpha\mu_j}}
\left\langle V_{M,r}^\top \boldsymbol{u}_i,V_{M,r}^\top \boldsymbol{u}_j\right\rangle\\
&=&
\frac{1}{\sqrt{\alpha\mu_i}\sqrt{\alpha\mu_j}}
\left\langle \boldsymbol{u}_i,V_{M,r}V_{M,r}^\top \boldsymbol{u}_j\right\rangle
=
\frac{1}{\sqrt{\alpha\mu_i}\sqrt{\alpha\mu_j}}
\left\langle \boldsymbol{u}_i,\alpha\mu_j\boldsymbol{u}_j\right\rangle
=
\delta_{ij}.    
\end{eqnarray*}
Thus, the vectors $\boldsymbol{z}_1,\dots,\boldsymbol{z}_M$ are orthonormal in $\mathbb{R}^R$. Let now
\[
\left\{\boldsymbol{y}_1,\dots,\boldsymbol{y}_{R-M}\right\}
\]
be an orthonormal basis of $\ker\left(V_{M,r}\right)$. We show that each $\boldsymbol{z}_j$ is orthogonal to $\ker\left(V_{M,r}\right)$. Let
$\boldsymbol{y}\in\ker\left(V_{M,r}\right)$. Then, using~\eqref{defz}, we have
\begin{equation}\label{ortprop}
    \left\langle \boldsymbol{z}_j,\boldsymbol{y}\right\rangle
=
\frac{1}{\sqrt{\alpha\mu_j}}
\left\langle V_{M,r}^\top \boldsymbol{u}_j,\boldsymbol{y}\right\rangle
=
\frac{1}{\sqrt{\alpha\mu_j}}
\left\langle \boldsymbol{u}_j,V_{M,r}\boldsymbol{y}\right\rangle
=
0.
\end{equation}
Therefore
\[
\left\{\boldsymbol{y}_1,\dots,\boldsymbol{y}_{R-M},\boldsymbol{z}_1,\dots,\boldsymbol{z}_M\right\}
\]
is an orthonormal basis of $\mathbb{R}^R$.

For every $\ell=1,\dots,R-M$, since $\boldsymbol{y}_\ell\in\ker\left(V_{M,r}\right)$, we have
\[
\mathcal K
\begin{bmatrix}
\boldsymbol{y}_\ell\\
\boldsymbol{0}
\end{bmatrix}
=
\begin{bmatrix}
\alpha\boldsymbol{y}_\ell\\
V_{M,r}\boldsymbol{y}_\ell
\end{bmatrix}
=
\begin{bmatrix}
\alpha\boldsymbol{y}_\ell\\
\boldsymbol{0}
\end{bmatrix}
=
\alpha
\begin{bmatrix}
\boldsymbol{y}_\ell\\
\boldsymbol{0}
\end{bmatrix}.
\]
Therefore
\[
\begin{bmatrix}
\boldsymbol{y}_1\\
\boldsymbol{0}
\end{bmatrix},
\dots,
\begin{bmatrix}
\boldsymbol{y}_{R-M}\\
\boldsymbol{0}
\end{bmatrix}
\]
are linearly independent eigenvectors of $\mathcal K$ associated with the eigenvalue $\alpha$. Hence
$\alpha$ is an eigenvalue of $\mathcal K$ with multiplicity at least $R-M$.

For each $j=1,\dots,M$, consider the two-dimensional subspace
\[
\mathcal E_j
=
\operatorname{span}\left\{
\begin{bmatrix}
\boldsymbol{z}_j\\
\boldsymbol{0}
\end{bmatrix},
\begin{bmatrix}
\boldsymbol{0}\\
\boldsymbol{u}_j
\end{bmatrix}
\right\}
\subset\mathbb{R}^{R+M}.
\]
By~\eqref{Vz} and~\eqref{defz}, we obtain
\[
\mathcal K
\begin{bmatrix}
\boldsymbol{z}_j\\
\boldsymbol{0}
\end{bmatrix}
=
\begin{bmatrix}
\alpha\boldsymbol{z}_j\\
V_{M,r}\boldsymbol{z}_j
\end{bmatrix}
=
\begin{bmatrix}
\alpha\boldsymbol{z}_j\\
\sqrt{\alpha\mu_j}\,\boldsymbol{u}_j
\end{bmatrix},
\]
and
\[
\mathcal K
\begin{bmatrix}
\boldsymbol{0}\\
\boldsymbol{u}_j
\end{bmatrix}
=
\begin{bmatrix}
V_{M,r}^\top \boldsymbol{u}_j\\
\boldsymbol{0}
\end{bmatrix}
=
\begin{bmatrix}
\sqrt{\alpha\mu_j}\,\boldsymbol{z}_j\\
\boldsymbol{0}
\end{bmatrix}.
\]
Thus $\mathcal E_j$ is invariant under $\mathcal K$. Let
\[
T_j:\mathcal E_j\to\mathcal E_j
\]
be the restriction of $\mathcal K$ to $\mathcal E_j$, namely
\[
T_j\left(\boldsymbol{v}\right)=\mathcal K\boldsymbol{v},
\qquad \boldsymbol{v}\in\mathcal E_j.
\]
With respect to the orthonormal basis
\[
\left\{
\begin{bmatrix}
\boldsymbol{z}_j\\
\boldsymbol{0}
\end{bmatrix},
\begin{bmatrix}
\boldsymbol{0}\\
\boldsymbol{u}_j
\end{bmatrix}
\right\},
\]
the matrix representing $T_j$ is
\[
\begin{bmatrix}
\alpha & \sqrt{\alpha\mu_j}\\
\sqrt{\alpha\mu_j} & 0
\end{bmatrix}.
\]
Therefore, the eigenvalues of $T_j$ are the solution of the characteristic equation
\[
\lambda^2-\alpha\lambda-\alpha\mu_j=0,
\]
namely
\[
\lambda_j^\pm
=
\frac{\alpha\pm\sqrt{\alpha^2+4\alpha\mu_j}}{2}.
\]
Since $T_j$ is the restriction of $\mathcal K$ to the invariant subspace $\mathcal E_j$, every eigenvalue
of $T_j$ is an eigenvalue of $\mathcal K$. Hence
\[
\lambda_j^\pm\in\operatorname{spec}\left(\mathcal K\right),
\qquad j=1,\dots,M.
\]
Moreover, we have
\[
\lambda_j^+>0,
\qquad
\lambda_j^-<0.
\]
We now verify that these are all the eigenvalues of $\mathcal K$. Using~\eqref{ortprop}, the family
\[
\left\{
\begin{bmatrix}
\boldsymbol{y}_\ell\\
\boldsymbol{0}
\end{bmatrix}
:\ell=1,\dots,R-M
\right\}
\cup
\left\{
\begin{bmatrix}
\boldsymbol{z}_j\\
\boldsymbol{0}
\end{bmatrix},
\begin{bmatrix}
\boldsymbol{0}\\
\boldsymbol{u}_j
\end{bmatrix}
:j=1,\dots,M
\right\}
\]
is orthonormal in $\mathbb{R}^{R+M}$, and its cardinality is
\[
\left(R-M\right)+2M=R+M.
\]
Hence it is a basis of $\mathbb{R}^{R+M}$. It follows that
\[
\operatorname{spec}\left(\mathcal K\right)
=
\left\{\alpha\right\}^{R-M}
\cup
\left\{\lambda_j^+,\lambda_j^-:j=1,\dots,M\right\}.
\]

Since $\mathcal K$ is symmetric, its singular values coincide with the absolute values of its eigenvalues. Therefore, for $j=1,\dots,M$, we have
\[
\left|\lambda_j^+\right|
=
\frac{\alpha+\sqrt{\alpha^2+4\sigma_j\left(V_{M,r}\right)^2}}{2},
\qquad
\left|\lambda_j^-\right|
=
\frac{\sqrt{\alpha^2+4\sigma_j\left(V_{M,r}\right)^2}-\alpha}{2}.
\]
Moreover, $\alpha$ is a singular value of $\mathcal K$ with multiplicity $R-M$. Finally, the largest singular value of $\mathcal K$ is \[ \frac{\alpha+\sqrt{\alpha^2+4\sigma_{\max}\left(V_{M,r}\right)^2}}{2}, \] while the smallest one is \[ \min\left\{ \alpha,\, \frac{\sqrt{\alpha^2+4\sigma_{\min}\left(V_{M,r}\right)^2}-\alpha}{2} \right\}. \]
Taking the ratio gives~\eqref{eq:KKT_cond_design}.
\end{proof}

\section{A quasi-optimality estimate under a discrete norming assumption}
\label{secQuasiOp}

We now assume that the interpolation set $X_M$ is unisolvent for $\Pi_m\left(\mathbb{S}^2\right)$. Hence, for any $g\in C\left(\mathbb{S}^2\right)$, there exists a unique polynomial
$p_m[g]\in \Pi_m\left(\mathbb{S}^2\right)$ satisfying
\[
p_m[g]\left(\boldsymbol{x}_j'\right)=g\left(\boldsymbol{x}_j'\right),
\qquad j=1,\dots,M.
\]
Let $\left\{\ell_j\right\}_{j=1}^M\subset \Pi_m\left(\mathbb{S}^2\right)$ denote the
associated Lagrange basis, characterized by
\[
\ell_j\left(\boldsymbol{x}_k'\right)=\delta_{jk},
\qquad j,k=1,\dots,M.
\]
The interpolation operator is therefore given by
\[
p_m[g](\boldsymbol{x})=\sum_{j=1}^M g\left(\boldsymbol{x}_j'\right)\ell_j(\boldsymbol{x}).
\]
We denote its Lebesgue constant by
\[
\Lambda_M
:=
\sup_{\boldsymbol{x}\in\mathbb{S}^2}
\sum_{j=1}^M \left|\ell_j(\boldsymbol{x})\right|.
\]
The definition of $\Lambda_M$ implies that
\begin{equation}\label{eq:IM_stability}
\|p_m[g]\|_{L^\infty(\mathbb{S}^2)}
\leq
\Lambda_M \|g\|_{L^\infty(\mathbb{S}^2)}.
\end{equation}
Moreover, since $p_m[\cdot]$ is a projector onto
$\Pi_m\left(\mathbb{S}^2\right)$, we get
\begin{equation}\label{eq:IM_error1}
\left\|g-p_m[g]\right\|_{L^\infty\left(\mathbb{S}^2\right)}
\le
(1+\Lambda_M)\inf_{q\in \Pi_m\left(\mathbb{S}^2\right)}
\|g-q\|_{L^\infty\left(\mathbb{S}^2\right)}.
\end{equation}
We assume that the sampling set satisfies 
\begin{equation}\label{eq:lower_MZ}
\frac{1}{N}\sum_{\boldsymbol{x}_i\in X_N} \left|p\left(\boldsymbol{x}_i\right)\right|^2
\geq
\alpha_r \left\|p\right\|_{L^2\left(\mathbb{S}^2\right)}^2,
\qquad
\forall p\in \Pi_r\left(\mathbb{S}^2\right),
\end{equation}
for a suitable constant $\alpha_r>0$. In the following we will use the classical bound
\begin{equation}\label{boundsa}
    \|g\|_{L^2\left(\mathbb{S}^2\right)}^2
=
\int_{\mathbb{S}^2} |g(\boldsymbol{x})|^2d\omega(\boldsymbol{x})
\leq
\|g\|_{L^\infty\left(\mathbb{S}^2\right)}^2
\int_{\mathbb{S}^2} d\omega(\boldsymbol{x})
=
4\pi\|g\|_{L^\infty\left(\mathbb{S}^2\right)}^2, \quad g\in C\left(\mathbb{S}^2\right).
\end{equation}

\begin{remark}
If $X_N$ is a spherical $k$-design with $k\ge 2r$, then
Theorem~\ref{thm:design_orthogonality} gives
\[
\frac{1}{N}\sum_{\boldsymbol{x}_i\in X_N}
\left|p\left(\boldsymbol{x}_i\right)\right|^2
=
\frac{1}{4\pi}\|p\|_{L^2\left(\mathbb{S}^2\right)}^2,
\qquad
\forall p\in \Pi_r\left(\mathbb{S}^2\right).
\]
Thus~\eqref{eq:lower_MZ} holds with $\alpha_r=\frac{1}{4\pi}$.
\end{remark}

In the next result, we prove a quasi-optimality estimate for the interpolation--regression approximant on the sphere.

\begin{theorem}\label{thm:projection_structure_sharp_quasi}
Assume that $X_M$ is unisolvent for $\Pi_m\left(\mathbb{S}^2\right)$ and that the sampling set $X_N$ satisfies~\eqref{eq:lower_MZ}. Then, for any $f\in C\left(\mathbb{S}^2\right)$, the following statements hold.
\begin{itemize}
\item[(a)] The polynomial $\hat p_r[f]-p_m[f]$ satisfies
\begin{equation}\label{eq:projection_identity}
\left\langle \left(f-p_m[f]\right)-\left(\hat p_r[f]-p_m[f]\right), q\right\rangle_{X_N}=0,
\qquad \forall q\in \mathcal V_{r,M}.
\end{equation}

\item[(b)] Using the norm~\eqref{normh}, the following Pythagorean identity holds
\begin{equation}\label{eq:pythagorean_identity}
\left\|f-\hat p_r[f]\right\|_{2,X_N}^2
+
\left\|\hat p_r[f]-p_m[f]\right\|_{2,X_N}^2
=
\left\|f-p_m[f]\right\|_{2,X_N}^2,
\end{equation}

\item[(c)] The interpolation--regression approximant satisfies
\begin{equation}\label{eq:sharp_quasi_optimal_L2}
\left\|f-\hat p_r[f]\right\|_{L^2\left(\mathbb{S}^2\right)}
\le
\left(
\sqrt{4\pi}+\frac{1}{\sqrt{\alpha_r}}
\right)
\left(1+\Lambda_M\right)
\inf_{q\in \Pi_m\left(\mathbb{S}^2\right)}
\left\|f-q\right\|_{L^\infty\left(\mathbb{S}^2\right)}.
\end{equation}
\end{itemize}
\end{theorem}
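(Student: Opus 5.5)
Part (a) follows directly from Theorem~\ref{thmorth}(i). The combination $\left(f-p_m[f]\right)-\left(\hat p_r[f]-p_m[f]\right)$ simplifies to $f-\hat p_r[f]$, so \eqref{eq:projection_identity} is just the orthogonality $\langle \hat p_r[f]-f,q\rangle_{X_N}=0$ for all $q\in\mathcal V_{r,M}$. The point of writing it this way is the following observation. Since $m<r$, we have $p_m[f]\in\Pi_m(\mathbb{S}^2)\subset\Pi_r(\mathbb{S}^2)$, and $p_m[f]$ interpolates $f$ on $X_M$. Hence $p_m[f]\in\mathcal A_{r,M}(f)$ and $\hat p_r[f]-p_m[f]\in\mathcal V_{r,M}$. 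Thus $\hat p_r[f]-p_m[f]$ is the $\langle\cdot,\cdot\rangle_{X_N}$-orthogonal projection of $f-p_m[f]$ (restricted to $X_N$) onto $\mathcal V_{r,M}$.

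For (b), I will write $f-p_m[f]=\left(f-\hat p_r[f]\right)+\left(\hat p_r[f]-p_m[f]\right)$ and expand $\|\cdot\|_{2,X_N}^2$. The cross term is $2\langle f-\hat p_r[f],\hat p_r[f]-p_m[f]\rangle_{X_N}$. It vanishes by (a), taking $q=\hat p_r[f]-p_m[f]\in\mathcal V_{r,M}$. This yields \eqref{eq:pythagorean_identity}.

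For (c), I split by the triangle inequality in $L^2(\mathbb{S}^2)$:
\[
\|f-\hat p_r[f]\|_{L^2}\le \|f-p_m[f]\|_{L^2}+\|\hat p_r[f]-p_m[f]\|_{L^2}.
\]
\begin{itemize}
\item[] \emph{First term.} By \eqref{boundsa} and \eqref{eq:IM_error1}, it is at most $\sqrt{4\pi}(1+\Lambda_M)E_m(f)$, where $E_m(f):=\inf_{q\in\Pi_m}\|f-q\|_{L^\infty}$.
\item[] \emph{Second term.} Since $\hat p_r[f]-p_m[f]\in\Pi_r(\mathbb{S}^2)$, the norming assumption \eqref{eq:lower_MZ} gives
\[
\|\hat p_r[f]-p_m[f]\|_{L^2}^2\le \frac{1}{\alpha_r N}\|\hat p_r[f]-p_m[f]\|_{2,X_N}^2 .
\]
By (b), the right-hand side is at most $\frac{1}{\alpha_r N}\|f-p_m[f]\|_{2,X_N}^2$. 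Then the trivial discrete bound $\|g\|_{2,X_N}^2\le N\|g\|_{L^\infty}^2$ cancels the factor $N$, and \eqref{eq:IM_error1} gives
\[
\|\hat p_r[f]-p_m[f]\|_{L^2}\le \alpha_r^{-1/2}(1+\Lambda_M)E_m(f).
\]
\end{itemize}
Adding the two bounds gives exactly the constant $\left(\sqrt{4\pi}+1/\sqrt{\alpha_r}\right)(1+\Lambda_M)$ in \eqref{eq:sharp_quasi_optimal_L2}.

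The only delicate step is the bridge between the discrete and continuous norms in (c). The lower norming inequality applies only to polynomials in $\Pi_r$, so it must be used on $\hat p_r[f]-p_m[f]$ and not on $f-\hat p_r[f]$. The remaining residual $f-p_m[f]$ must be controlled through the sup norm, with the $1/N$ normalization cancelling. I also need $m<r$ so that $p_m[f]$ is admissible in $\mathcal A_{r,M}(f)$, which is guaranteed by the standing assumption.
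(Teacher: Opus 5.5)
Your proposal is correct and follows the paper's proof essentially step for step. In (a) you note $p_m[f]\in\mathcal A_{r,M}(f)$, so that $\hat p_r[f]-p_m[f]\in\mathcal V_{r,M}$, and invoke Theorem~\ref{thmorth}(i); in (b) you expand the norm and kill the cross term with $q=\hat p_r[f]-p_m[f]$; and in (c) you combine the triangle inequality, \eqref{boundsa}, the lower norming inequality applied to $\hat p_r[f]-p_m[f]$ together with (b) and $\|g\|_{2,X_N}\le\sqrt{N}\|g\|_{L^\infty(\mathbb{S}^2)}$, and finally \eqref{eq:IM_error1}.
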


\begin{proof}
We consider $f\in C\left(\mathbb{S}^2\right)$. Since $\Pi_m\left(\mathbb{S}^2\right)\subset \Pi_r\left(\mathbb{S}^2\right)$, we have $p_m[f]\in \mathcal A_{r,M}(f)$. By construction, also $\hat p_r[f]\in \mathcal A_{r,M}(f)$, and therefore
\begin{equation}\label{hatpcorr}
    \hat p_r[f]-p_m[f]\in \mathcal V_{r,M}.
\end{equation}
Let $q\in \mathcal V_{r,M}$. Using the linearity of the inner product, we have
\[
\left\langle f-\hat p_r[f], q\right\rangle_{X_N}=\left\langle \left(f-p_m[f]\right)-\left(\hat p_r[f]-p_m[f]\right), q\right\rangle_{X_N}. 
\]
Then, by Theorem~\ref{thmorth}, we get
\[
\left\langle \left(f-p_m[f]\right)-\left(\hat p_r[f]-p_m[f]\right), q\right\rangle_{X_N}
=0,
\]
which proves~\eqref{eq:projection_identity}.

We now prove~\eqref{eq:pythagorean_identity}. By~\eqref{eq:projection_identity} and~\eqref{hatpcorr}, we have
\begin{equation}\label{pytdes}
    \left\langle 
f-\hat p_r[f],
\hat p_r[f]-p_m[f]
\right\rangle_{X_N}=\left\langle 
\left(f-p_m[f]\right)-\left(\hat p_r[f]-p_m[f]\right),
 \hat p_r[f]-p_m[f]
\right\rangle_{X_N}
=0.
\end{equation}
Therefore
\begin{eqnarray*}
\left\|f-p_m[f]\right\|_{2,X_N}^2
&=&
\left\|
\left(f-\hat p_r[f]\right)+\left(\hat p_r[f]-p_m[f]\right)
\right\|_{2,X_N}^2 \\
&=&
\left\|f-\hat p_r[f]\right\|_{2,X_N}^2
+
\left\|\hat p_r[f]-p_m[f]\right\|_{2,X_N}^2 
+2\left\langle f-\hat p_r[f], \hat p_r[f]-p_m[f]\right\rangle_{X_N}\\
&=& \left\|f-\hat p_r[f]\right\|_{2,X_N}^2
+
\left\|\hat p_r[f]-p_m[f]\right\|_{2,X_N}^2.
\end{eqnarray*}
This proves~\eqref{eq:pythagorean_identity}.
Consequently, we can write
\begin{equation}\label{eq:projection_bound_disc}
\left\|f-p_m[f]\right\|_{2,X_N}\ge 
\left\|\hat p_r[f]-p_m[f]\right\|_{2,X_N}.
\end{equation}
Using~\eqref{normh} and applying the inequality~\eqref{eq:lower_MZ}, we get
\[
\left\|\hat p_r[f]-p_m[f]\right\|_{L^2\left(\mathbb{S}^2\right)}
\le
\frac{1}{\sqrt{N\alpha_r}}
\left\|\hat p_r[f]-p_m[f]\right\|_{2,X_N}.
\]
Combining this with~\eqref{eq:projection_bound_disc}, we obtain
\[
\left\|\hat p_r[f]-p_m[f]\right\|_{L^2\left(\mathbb{S}^2\right)}
\le
\frac{1}{\sqrt{N\alpha_r}}
\left\|f-p_m[f]\right\|_{2,X_N}.
\]
By definition of the $L^\infty$ norm, we have
\[
\left\|f-p_m[f]\right\|_{2,X_N}
\le
\sqrt{N}
\left\|f-p_m[f]\right\|_{L^\infty\left(\mathbb{S}^2\right)}.
\]
Then
\begin{equation*}
\left\|\hat p_r[f]-p_m[f]\right\|_{L^2\left(\mathbb{S}^2\right)}
\le
\frac{1}{\sqrt{\alpha_r}}
\left\|f-p_m[f]\right\|_{L^\infty\left(\mathbb{S}^2\right)}.
\end{equation*}
Hence, using the triangle inequality and~\eqref{boundsa},  we obtain
\begin{eqnarray*}
\left\|f-\hat p_r[f]\right\|_{L^2\left(\mathbb{S}^2\right)}
&\le&
\left\|f-p_m[f]\right\|_{L^2\left(\mathbb{S}^2\right)}
+
\left\|p_m[f]-\hat p_r[f]\right\|_{L^2\left(\mathbb{S}^2\right)} \\
&\le&
\sqrt{4\pi}
\left\|f-p_m[f]\right\|_{L^\infty\left(\mathbb{S}^2\right)}
+
\frac{1}{\sqrt{\alpha_r}}
\left\|f-p_m[f]\right\|_{L^\infty\left(\mathbb{S}^2\right)}.
\end{eqnarray*}
Using~\eqref{eq:IM_error1}, we conclude~\eqref{eq:sharp_quasi_optimal_L2}.
\end{proof}

\begin{remark}
The inequality~\eqref{eq:lower_MZ} imposes a nontrivial requirement on the
distribution of the sampling points and is not automatically satisfied by
arbitrary node configurations. Indeed, let $r\ge 1$ and consider, for any
$N\ge 1$, the equatorial set
\[
X_N=
\left\{
\boldsymbol{x}_j=
\left(
\cos\frac{2\pi j}{N},
\sin\frac{2\pi j}{N},
0
\right)
:\ j=0,\dots,N-1
\right\}
\subset \mathbb{S}^2 .
\]
For the polynomial $p(x,y,z)=z$, we have
$p\in\Pi_1\left(\mathbb{S}^2\right)\subset\Pi_r\left(\mathbb{S}^2\right)$ and
\[
p\left(\boldsymbol{x}_j\right)=0,
\qquad j=0,\dots,N-1.
\]
Thus
\[
\frac{1}{N}
\sum_{\boldsymbol{x}_i\in X_N}
\left|p\left(\boldsymbol{x}_i\right)\right|^2=0,
\]
whereas
\[
\|p\|_{L^2(\mathbb{S}^2)}^2
=
\int_{\mathbb{S}^2} z^2\,d\omega
=
\frac{4\pi}{3}
>0.
\]
\end{remark}

\section{A constructive antipodal strategy for the interpolation--regression scheme}
\label{sec:algorithm}
The antipodally symmetric framework developed in Section~\ref{sec:interp_sphere} requires an interpolation set that is itself compatible with the symmetry of the problem and, at the same time, satisfies the rank condition needed for the constrained least squares formulation. We describe here a constructive way to obtain such a set.

Throughout this section, let
\[
X_N=
\left\{
\boldsymbol{x}_1,\dots,\boldsymbol{x}_N
\right\}
\subset \mathbb{S}^2
\]
be a unisolvent set of nodes for $\Pi_n\left(\mathbb{S}^2\right)$.
We define the set
\[
\widetilde{X}_N
=
X_N\cup \left(-X_N\right),
\]
which is antipodally symmetric by construction. 
For simplicity, we assume that
\[
X_N\cap(-X_N)=\emptyset.
\]
For a fixed degree $r<n$, let $\widetilde{V}_{N,r}$ 
denote the Vandermonde matrix relative to $\widetilde{X}_N$. In the next result, we prove that the rank condition~\eqref{eq:rank_condition_section} is satisfied.

\begin{proposition}
Let $X_N\subset \mathbb{S}^2$ be unisolvent for $\Pi_n\left(\mathbb{S}^2\right)$, and let
\[
\widetilde{X}_N=X_N\cup \left(-X_N\right).
\]
If $r\leq n$, then
\[
\operatorname{rank}\left(\widetilde{V}_{N,r}\right)=R.
\]
\end{proposition}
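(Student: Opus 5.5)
The plan is to show that the columns of $\widetilde{V}_{N,r}$ are linearly independent. Equivalently, we show that the only polynomial $p\in\Pi_r\left(\mathbb{S}^2\right)$ vanishing on all of $\widetilde{X}_N$ is the zero polynomial.

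First, note the structure of $\widetilde{V}_{N,r}$. Ordering the nodes of $\widetilde{X}_N$ as those of $X_N$ followed by those of $-X_N$, we have
\[
\widetilde{V}_{N,r}=\begin{bmatrix} V_{N,r}\\ V_{N,r}^{\star}\end{bmatrix},
\]
where $V_{N,r}^{\star}$ has entries $b_j(-\boldsymbol{x}_i)$. Since rank cannot decrease when rows are added, it suffices to prove that $\operatorname{rank}\left(V_{N,r}\right)=R$. This is the rank of the Vandermonde matrix on $X_N$ for the basis of $\Pi_r\left(\mathbb{S}^2\right)$.

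Second, use unisolvence. Let $\boldsymbol{c}\in\mathbb{R}^R$ satisfy $V_{N,r}\boldsymbol{c}=\boldsymbol{0}$, and set $p=\sum_j c_j b_j\in\Pi_r\left(\mathbb{S}^2\right)$. Then $p$ vanishes on $X_N$. Because $r\le n$, we have $\Pi_r\left(\mathbb{S}^2\right)\subseteq\Pi_n\left(\mathbb{S}^2\right)$, so $p$ is a polynomial of degree at most $n$ vanishing on a unisolvent set. Unisolvence means the $N\times N$ Vandermonde matrix $V_{N,n}$ is nonsingular. Writing $p$ in a basis of $\Pi_n\left(\mathbb{S}^2\right)$, for instance real spherical harmonics extending $\mathcal{B}_r$, the coefficient vector of $p$ lies in $\ker V_{N,n}=\{\boldsymbol{0}\}$. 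Hence $p=0$. Since $b_1,\dots,b_R$ are linearly independent, this gives $\boldsymbol{c}=\boldsymbol{0}$.

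Equivalently, $V_{N,r}$ consists of $R$ columns of the nonsingular matrix $V_{N,n}$, up to a change of basis, and therefore has full column rank. This yields $\operatorname{rank}\left(\widetilde{V}_{N,r}\right)=R$. The rank also cannot exceed $R$, because the matrix has only $R$ columns and $2N\ge R$ rows.

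There is no real obstacle. The one point needing care is the basis independence of the rank. Changing the basis of $\Pi_r\left(\mathbb{S}^2\right)$ multiplies $V_{N,r}$ on the right by an invertible matrix, so we may work with the spherical-harmonic basis nested in that of $\Pi_n\left(\mathbb{S}^2\right)$. The hypothesis $X_N\cap(-X_N)=\emptyset$ is not needed for the rank claim; it only ensures that $\widetilde{X}_N$ has $2N$ distinct points.
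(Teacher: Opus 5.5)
Your proof is correct and follows the paper's own argument. Unisolvence of $X_N$ for $\Pi_n\left(\mathbb{S}^2\right)\supseteq\Pi_r\left(\mathbb{S}^2\right)$ gives $\operatorname{rank}\left(V_{N,r}\right)=R$, and appending the rows for $-X_N$ cannot lower the column rank. Your remarks on basis independence and on the disjointness hypothesis being unnecessary are accurate, though not needed for the argument.
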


\begin{proof}
Let $V_{N,r}$ denote the Vandermonde matrix obtained by evaluating a basis of $\Pi_r\left(\mathbb{S}^2\right)$ at the nodes of $X_N$. Since
\[
\Pi_r\left(\mathbb{S}^2\right)\subset \Pi_n\left(\mathbb{S}^2\right),
\]
the unisolvence of $X_N$ for $\Pi_n\left(\mathbb{S}^2\right)$ implies that the only polynomial in $\Pi_r\left(\mathbb{S}^2\right)$ vanishing at all nodes of $X_N$ is the zero polynomial. Hence 
\[
\operatorname{rank}\left(V_{N,r}\right)=R.
\]
The matrix $\widetilde{V}_{N,r}$ is obtained from $V_{N,r}$ by adding the rows corresponding to the antipodal nodes. Since adding rows cannot decrease the column rank, we have
\[
\operatorname{rank}\left(\widetilde{V}_{N,r}\right)= R.
\]
\end{proof}

Once the rank condition has been established for the completed sampling set, we turn to the selection of the interpolation nodes. Our goal is to extract from $\widetilde X_N$ an antipodally symmetric subset
\[
X_M\subset \widetilde{X}_N
\]
such that the rank condition~\eqref{rankM} is satisfied. The antipodal structure allows this condition to be expressed in terms of the even and odd components of the basis. Assuming to have ordered the real spherical harmonics according to the parity of the degree, we write
\[
\widetilde{V}_{N,r}
=
\left[
\widetilde{V}_{N,r}^{+}\;\;\widetilde{V}_{N,r}^{-}
\right].
\]
For each $j=1,\dots,N$, let $\boldsymbol{v}_{j}^{+}$ and $\boldsymbol{v}_{j}^{-}$ denote the rows of $\widetilde{V}_{N,r}^{+}$ and $\widetilde{V}_{N,r}^{-}$ corresponding to the node $\boldsymbol{x}_j$, respectively. By the parity relation~\eqref{parcond}, the corresponding rows corresponding to the antipodal node $-\boldsymbol{x}_j$ are $\boldsymbol{v}_{j}^{+}$ and $-\boldsymbol{v}_{j}^{-}$. Therefore, if
\[
J\subset\{1,\dots,N\},
\]
the Vandermonde matrix associated with the antipodal subset
\[
X(J)
=
\left\{
\boldsymbol{x}_j,-\boldsymbol{x}_j:\ j\in J
\right\}
\]
has the block form
\[
\widetilde{V}_{J,r}
=
\begin{bmatrix}
\widetilde V_{J,r}^{+} & \widetilde V_{J,r}^{-}\\
\widetilde V_{J,r}^{+} & -\widetilde V_{J,r}^{-}
\end{bmatrix},
\]
where $\widetilde V_{J,r}^{+}$ and $\widetilde V_{J,r}^{-}$ are the matrices obtained by collecting the rows $\boldsymbol{v}_{j}^{+}$ and $\boldsymbol{v}_{j}^{-}$ for $j\in J$, respectively.

The following result is crucial for the construction below.

\begin{proposition}\label{prop:pair_rank_characterization}
Let $J\subset\{1,\dots,N\}$ and set $K=|J|$. Let $R_+$ and $R_-$ denote the number of columns of
$\widetilde V_{J,r}^{+}$ and $\widetilde V_{J,r}^{-}$, respectively. Assume that
\[
K\le \min\left\{R_+,R_-\right\}.
\]
Then
\[
\operatorname{rank}\left(\widetilde V_{J,r}\right)=2K
\]
if and only if
\[
\operatorname{rank}\left(\widetilde V_{J,r}^{+}\right)=K
\qquad\text{and}\qquad
\operatorname{rank}\left(\widetilde V_{J,r}^{-}\right)=K.
\]
\end{proposition}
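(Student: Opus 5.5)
The plan is to reduce the rank of $\widetilde V_{J,r}$ to the ranks of its even and odd blocks by an explicit invertible row transformation. Set
\[
P=\frac{1}{\sqrt{2}}\begin{bmatrix} I_K & I_K\\ I_K & -I_K\end{bmatrix}\in\mathbb{R}^{2K\times 2K}.
\]
This matrix is orthogonal, hence nonsingular. Left multiplication gives
\[
P\,\widetilde V_{J,r}
=\frac{1}{\sqrt{2}}\begin{bmatrix} 2\widetilde V_{J,r}^{+} & \boldsymbol{0}\\ \boldsymbol{0} & 2\widetilde V_{J,r}^{-}\end{bmatrix}
=\sqrt{2}\begin{bmatrix} \widetilde V_{J,r}^{+} & \boldsymbol{0}\\ \boldsymbol{0} & \widetilde V_{J,r}^{-}\end{bmatrix}.
\]
This is simply the matrix counterpart of adding and subtracting the equations on each antipodal pair, as in the proof of Theorem~\ref{thm:parity_decoupling}. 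Multiplication by a nonsingular matrix preserves rank, and the rank of a block diagonal matrix is the sum of the ranks of its blocks. Therefore
\[
\operatorname{rank}\left(\widetilde V_{J,r}\right)=\operatorname{rank}\left(\widetilde V_{J,r}^{+}\right)+\operatorname{rank}\left(\widetilde V_{J,r}^{-}\right).
\]

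The equivalence then follows by counting. Each block $\widetilde V_{J,r}^{\pm}$ has $K$ rows, so its rank is at most $K$. Consequently the sum above equals $2K$ if and only if both summands equal $K$. The \enquote{if} direction is immediate. For the \enquote{only if} direction, if one block had rank strictly less than $K$, the total would be strictly less than $2K$.

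The assumption $K\le\min\{R_+,R_-\}$ is only needed to make full row rank $K$ attainable for each block. Without it, a block would have fewer than $K$ columns, full row rank $K$ would be impossible, and both sides of the equivalence would be false. The argument above holds regardless, but I will mention the role of the hypothesis.

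I do not expect a real obstacle here. The only point that needs care is the block structure itself: the rows attached to $-\boldsymbol{x}_j$ must be exactly $\boldsymbol{v}_j^{+}$ and $-\boldsymbol{v}_j^{-}$. This follows from the parity relation~\eqref{parcond}, provided the columns are ordered with $\mathcal B_r^{+}$ first and $\mathcal B_r^{-}$ second, and the rows are ordered with all $\boldsymbol{x}_j$, $j\in J$, followed by their antipodes in the same order. Reordering rows or columns does not change the rank, so this ordering causes no loss of generality.
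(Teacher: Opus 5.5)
Your proof is correct and follows the paper's argument. The paper uses the same row transformation, normalized by $\tfrac12$ instead of $\tfrac{1}{\sqrt2}$ so that it produces exactly $\operatorname{diag}\bigl(\widetilde V_{J,r}^{+},\widetilde V_{J,r}^{-}\bigr)$, and then concludes by rank additivity and the fact that each block has only $K$ rows; your side remark that the hypothesis $K\le\min\{R_+,R_-\}$ is not needed for the equivalence is also accurate.
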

\begin{proof}
By construction
\[
\widetilde V_{J,r}
=
\begin{bmatrix}
\widetilde V_{J,r}^{+} & \widetilde V_{J,r}^{-}\\
\widetilde V_{J,r}^{+} & -\widetilde V_{J,r}^{-}
\end{bmatrix}\in\mathbb{R}^{2K\times R},
\]
where
\[
\widetilde V_{J,r}^{+}\in\mathbb{R}^{K\times R_+},
\qquad
\widetilde V_{J,r}^{-}\in\mathbb{R}^{K\times R_-},
\qquad
R_+ + R_- = R.
\]
We define
\[
T:=\frac{1}{2}
\begin{bmatrix}
I_K & I_K\\
I_K & -I_K
\end{bmatrix}
\in\mathbb{R}^{2K\times 2K},
\]
where $I_K\in \mathbb{R}^{K\times K}$ is the identity matrix.
Then
\[
T\widetilde V_{J,r}
=
\begin{bmatrix}
\widetilde V_{J,r}^{+} & 0\\
0 & \widetilde V_{J,r}^{-}
\end{bmatrix}.
\]
Since left multiplication by $T$ preserves rank, we obtain
\[
\operatorname{rank}\left(\widetilde V_{J,r}\right)
=
\operatorname{rank}\left(\widetilde V_{J,r}^{+}\right)
+
\operatorname{rank}\left(\widetilde V_{J,r}^{-}\right).
\]
The conclusion follows immediately, since both reduced matrices have $K$ rows.
\end{proof}

In Proposition~\ref{prop:pair_rank_characterization}, we show that it is sufficient to work with the reduced even and odd matrices when constructing admissible antipodal interpolation sets. This leads to a two-step construction.

\medskip

\noindent
\textbf{Step 1.}
We start with $J=\emptyset$ and add antipodal pairs iteratively. An index $j\notin J$ is admissible if
\[
\operatorname{rank}\left(\widetilde V_{J\cup\{j\},r}^{+}\right)
=
\operatorname{rank}\left(\widetilde V_{J,r}^{+}\right)+1
\]
and
\[
\operatorname{rank}\left(\widetilde V_{J\cup\{j\},r}^{-}\right)
=
\operatorname{rank}\left(\widetilde V_{J,r}^{-}\right)+1.
\]
Among all admissible candidates, one selects the pair according to a stability criterion. In the implementation used in the numerical experiments, we employ a greedy criterion based on the smallest singular values of the updated reduced matrices. The iteration is stopped when no further admissible pair can be added. The resulting set
\[
J_{\max}=\left\{j_1,\dots,j_{K_{\max}}\right\}
\]
defines a \emph{maximal} admissible antipodal system
\[
X_{\max}
=
\left\{
\boldsymbol{x}_{j_1},\dots,\boldsymbol{x}_{j_{K_{\max}}},
-\boldsymbol{x}_{j_1},\dots,-\boldsymbol{x}_{j_{K_{\max}}}
\right\},
\qquad M_{\max}=2K_{\max}.
\]

\medskip

\noindent
\textbf{Step 2.}
The maximal cardinality $M_{\max}$ need not coincide with a value of the form $(m+1)^2$, where $(m+1)^2=\dim \Pi_m\left(\mathbb{S}^2\right)$. Consider the set
\[
\mathcal{S}:=
\left\{
s\in\mathbb{N}:\ (s+1)^2\leq M_{\max},\ s<r,\ (s+1)^2 \ \text{even}
\right\}.
\]
If $\mathcal{S}=\emptyset$, we set $\widetilde X_M=\emptyset$, and the procedure reduces to the unconstrained least squares approximation over $\widetilde X_N$. Otherwise, we define
\[
m=\max \mathcal{S},
\qquad
M=(m+1)^2.
\]
The parity constraint is necessary because the interpolation set must remain antipodally symmetric. Since $M\leq M_{\max}$, we obtain the final interpolation set by removing some antipodal pairs from $X_{\max}$. This reduction is again performed greedily. At each step, we remove one antipodal pair only when the two reduced matrices remain of full row rank. Among the admissible choices, we select the removal that maximizes
\[
\min\left\{
\sigma_{\min}\left(\widetilde V_{J,r}^{+}\right),
\sigma_{\min}\left(\widetilde V_{J,r}^{-}\right)
\right\}.
\]
The final output is an antipodally symmetric subset $\widetilde{X}_M\subset X_{\max}\subset \widetilde{X}_N$
with cardinality $M=(m+1)^2$ and $\operatorname{rank}\left(\widetilde{V}_{M,r}\right)=M.$

The construction therefore proceeds in two stages. It first builds a maximal admissible antipodal set for the prescribed degree $r$. It then selects from it a subset whose cardinality has the form $(m+1)^2$, with $m<r$, while preserving the required rank condition.

\begin{remark}
The procedure should be understood as a constructive strategy. Specifically, its purpose is to produce an antipodally symmetric interpolation set with the required rank property, rather than to solve a global optimization problem.
\end{remark}

\begin{remark}
In the one-dimensional setting considered in~\cite{OTC:JCAM:2015}, the interpolation and reconstruction degrees can be related through explicit asymptotic rules. On the sphere, the situation is more dependent on the geometry of the nodes, and no analogous rule is available. 
\end{remark}

The complete procedure is summarized in Algorithm~\ref{alg:antipodal_strategy}.

\begin{algorithm}[H]
\caption{Constructive antipodal strategy}
\label{alg:antipodal_strategy}
\begin{algorithmic}[1]
\Require A unisolvent set $X_N\subset\mathbb{S}^2$ for $\Pi_n\left(\mathbb{S}^2\right)$; reconstruction degree $r<n$
\Ensure An antipodally symmetric set $\widetilde{X}_M\subset \widetilde{X}_N$ such that either $\widetilde{X}_M=\emptyset$, or $\left|\widetilde{X}_M\right|=M=(m+1)^2$ and $\operatorname{rank}\left(\widetilde{V}_{M,r}\right)=M$

\State Construct the antipodal completion
\[
\widetilde{X}_N = X_N\cup\left(-X_N\right)
\]

\State Build the Vandermonde matrix $\widetilde{V}_{N,r}$ and split it into even and odd blocks
\[
\widetilde{V}_{N,r}=\left[\widetilde{V}_{N,r}^{+}\;\;\widetilde{V}_{N,r}^{-}\right]
\]

\State Build the reduced pair matrices $\widetilde V_{N,r}^{+}$ and $\widetilde V_{N,r}^{-}$ associated with the antipodal pairs $\left\{\boldsymbol{x}_j,-\boldsymbol{x}_j\right\}$, $j=1,\dots,N$

\State Initialize $J\gets \emptyset$

\While{there exists $j\notin J$ such that
\[
\operatorname{rank}\left(\widetilde V_{J\cup\{j\},r}^{+}\right)
=
\operatorname{rank}\left(\widetilde V_{J,r}^{+}\right)+1
\]
and
\[
\operatorname{rank}\left(\widetilde V_{J\cup\{j\},r}^{-}\right)
=
\operatorname{rank}\left(\widetilde V_{J,r}^{-}\right)+1
\]}
    \State Select an admissible index $j$ according to a stability criterion
    \State $J\gets J\cup\{j\}$
\EndWhile

\State Set $J_{\max}\gets J$, \quad $M_{\max}\gets 2|J_{\max}|$

\State Define
\[
\mathcal{S}
=
\left\{
s\in\mathbb{N}:\ (s+1)^2\leq M_{\max},\ s<r,\ (s+1)^2 \ \text{even}
\right\}
\]

\If{$\mathcal{S}=\emptyset$}
    \State Set $\widetilde{X}_M\gets \emptyset$
    \State \Return $\widetilde{X}_M$
\EndIf

\State Set
\[
m = \max \mathcal{S}
\]

\State Set $M\gets (m+1)^2$ and $K\gets M/2$

\State Starting from $J_{\max}$, remove antipodal pairs one at a time until exactly $K$ pairs remain, preserving full row rank of both reduced matrices and favoring numerically stable choices

\State Denote by $J_M$ the resulting set of indices

\State Construct
\[
\widetilde{X}_M
=
\left\{
\boldsymbol{x}_j,-\boldsymbol{x}_j:\ j\in J_M
\right\}
\]

\State \Return $\widetilde{X}_M$
\end{algorithmic}
\end{algorithm}

\subsection{Practical selection of the reconstruction degree}
\label{subsec:parameter_selection}

The approximant introduced in~\eqref{eq:cls_problem} depends on the degree $r$. Once $r$ is fixed, the admissible antipodal set is constructed and the interpolation degree $m$ is determined. Thus $m$ is not prescribed independently. Since the admissible cardinality depends on the geometry of the nodes, we do not impose a closed-form rule for choosing $r$. In practice, $r$ may be selected by a validation procedure. Given a set of test functions
\[\mathcal{F}_{\mathrm{val}}
=
\left\{
f_1,\dots,f_S
\right\}
\]
a family of node sets $\left\{
X_{N_j}^{(j)}
\right\}_{j=1}^J$ and a finite set of degrees
\[
\mathcal{R}
=
\left\{
r_1,\dots,r_L
\right\}
\]
we compute, for each $r\in\mathcal R$, the corresponding interpolation--regression approximants and define
 \[
E(r)
:=
\sum_{s=1}^S \sum_{j=1}^J
\left\|
f_s-\hat p_r^{(j)}\left[f_s\right]
\right\|_{\infty}.
\]
We then choose
\[
r^\star \in \operatorname*{argmin}_{r\in\mathcal R} E(r).
\]

\begin{algorithm}[H]
\caption{Validation-based selection of the reconstruction degree}
\label{alg:degree_tuning}
\begin{algorithmic}[1]
\Require Validation functions $\mathcal{F}_{\mathrm{val}}=\left\{f_1,\dots,f_S\right\}$; node families $\left\{X_{N_j}^{(j)}\right\}_{j=1}^J$; candidate degrees $\mathcal{R}$
\Ensure Selected reconstruction degree $r^\star$

\State $E_{\min}\gets +\infty$, \quad $r^\star\gets \mathrm{undefined}$

\ForAll{$r\in\mathcal{R}$}
    \State $E(r)\gets 0$
    \For{$s=1$ to $S$}
        \For{$j=1$ to $J$}
            \State Construct the antipodal completion of $X_{N_j}^{(j)}$
            \State Apply Algorithm~\ref{alg:antipodal_strategy} with degree $r$
            \State Compute the corresponding interpolation--regression approximant $\hat p_r^{(j)}\left[f_s\right]$
            \State $E(r)\gets E(r)+\left\|f_s-\hat p_r^{(j)}\left[f_s\right]\right\|_{\infty}$
        \EndFor
    \EndFor
    \If{$E(r)<E_{\min}$}
        \State $E_{\min}\gets E(r)$
        \State $r^\star\gets r$
    \EndIf
\EndFor

\State \Return $r^\star$
\end{algorithmic}
\end{algorithm}

\section{Numerical experiments}\label{sec:numtest}
In this section, we report two families of numerical experiments aimed at testing the accuracy of the interpolation--regression operator~\eqref{eq:cls_problem}. We first study the method in the antipodally symmetric setting, where the construction of Section~\ref{sec:algorithm} can be applied directly. We then consider a general, non-symmetric configuration, in order to test the method under less structured node distributions.

We consider the following test functions on the unit sphere
\[
f_1(x,y,z)= e^{xz}, \qquad 
f_2(x,y,z)= \sin(3x)+\frac12\cos(2yz),
\]
\[
f_3(x,y,z)= \frac{1}{3+x^2+2y^2+4z^2}, \qquad
f_4(x,y,z)= \frac{1}{(x+1.5)^2+(y+1.5)^2+(z+1.5)^2}.
\]
For each test function $f_i$, we compute the uniform error
\[
E_{i,\infty}(r;X_N,X_M)
:=
\max_{\mathbf y\in Y_L^{\mathrm{val}}}
\left|f_i(\mathbf y)-\hat{p}_r\left[f_i\right](\mathbf y)\right|,
\qquad i=1,2,3,4,
\]
where 
\[
Y_L^{\mathrm{val}}\subset \mathbb S^2
\]
is a validation set generated by a spherical Fibonacci rule; see~\cite{FNI:JOP:2004}.

In the following, $n\in\mathbb{N}$ is used to fix the number of sampling nodes, and we set
\[
N=(n+1)^2.
\]
We perform the experiments for
\[
n\in\{20,30,40,50,60\}.
\]
For each value of $n$, the reconstruction degree $r$ is chosen such that $r<n$.

\subsection{Antipodal setting}
Let $X_N^{\mathrm{Xu}}$ be the unisolvent node set introduced in~\cite{PIO:ACOM:2004}. 
For the antipodal tests, we consider 
\[
\widetilde X_N=X_N^{\mathrm{Xu}}\cup\left(-X_N^{\mathrm{Xu}}\right)
\]
as sampling set. For each value of $n$, the reconstruction degree $r<n$ is chosen by the validation procedure described in Section~\ref{sec:algorithm}. The interpolation subset $X_M$ is then extracted from $\widetilde X_N$ using the constructive antipodal strategy of Section~\ref{sec:algorithm}. In Fig.~\ref{fig2} we show the resulting configurations for $n=40,50,60$. 
\begin{figure}
  \centering
\includegraphics[width=0.32\textwidth]{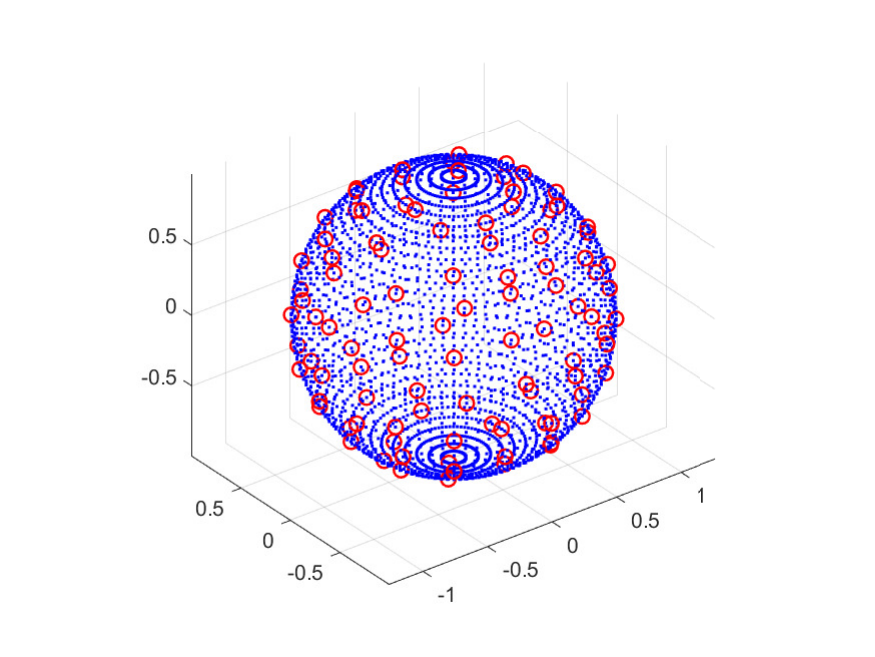} 
\includegraphics[width=0.32\textwidth]{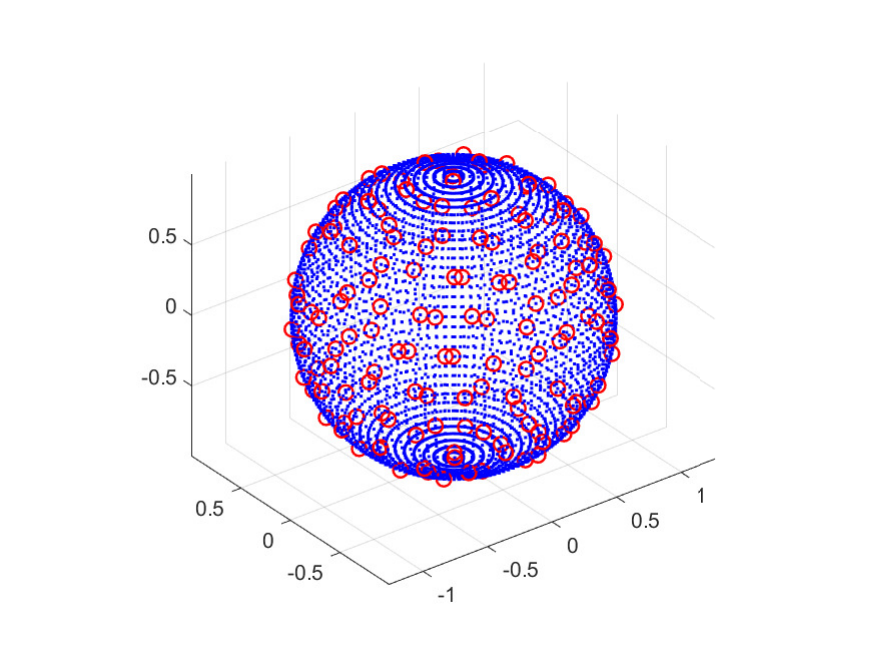} 
\includegraphics[width=0.32\textwidth]{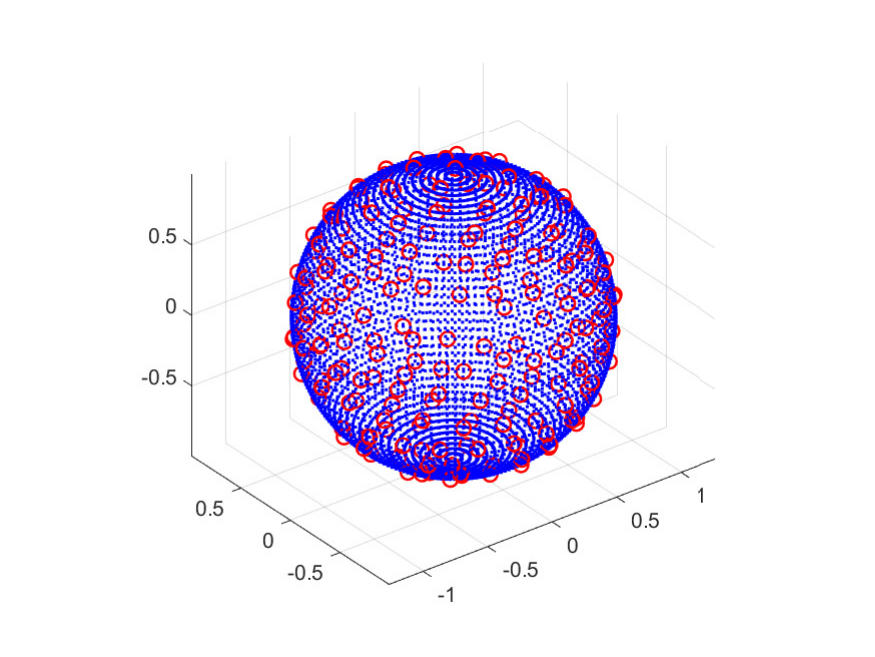} 
\caption{Antipodal distribution of the nodes $\widetilde{X}_N$ (blue) and of the selected subset $X_M$ (red) for $n=40$ (left), $n=50$ (middle), and $n=60$ (right).}
 \label{fig2}
\end{figure}

In Figure~\ref{fig1}, we report the trend of the maximum approximation error 
\[
E_{i,\infty}\left(r;\widetilde{X}_N,X_M\right)
=
\max_{\mathbf y\in Y_L^{\mathrm{val}}}
\left|f_i(\mathbf y)-\hat{p}_r\left[f_i\right](\mathbf y)\right|,
\qquad i=1,2,3,4.
\]
\begin{figure}
  \centering
\includegraphics[width=0.49\textwidth]{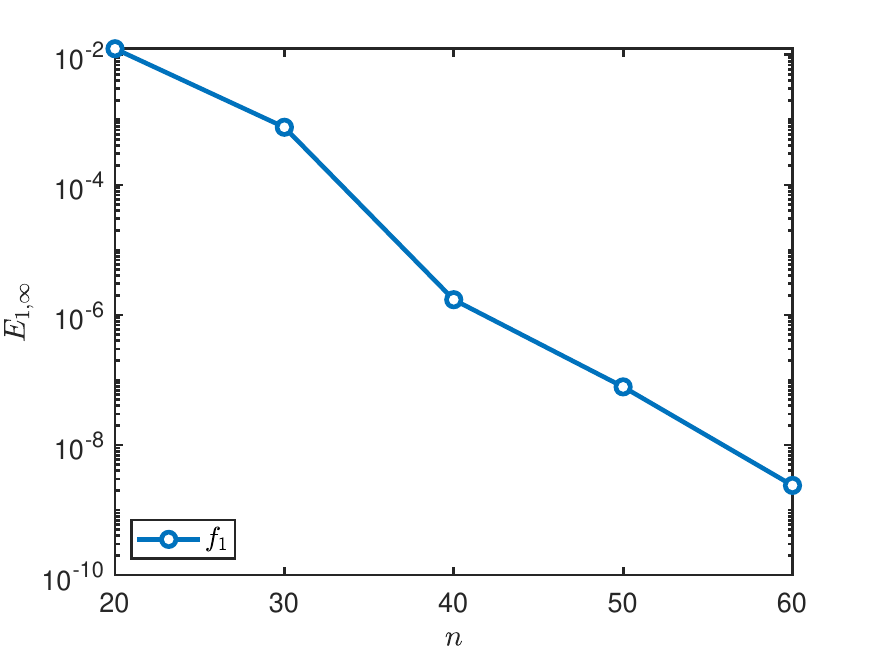} 
\includegraphics[width=0.49\textwidth]{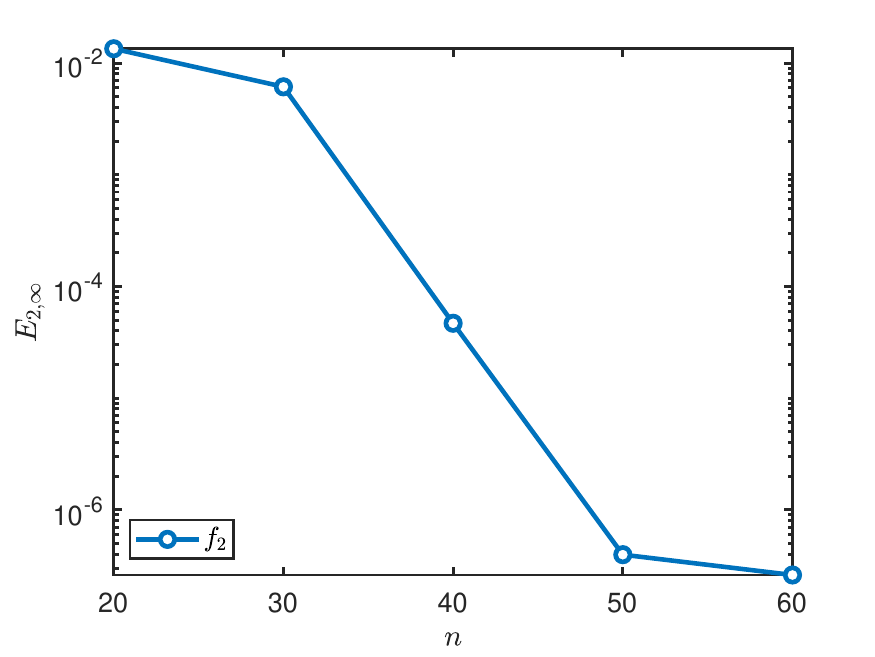} 
\includegraphics[width=0.49\textwidth]{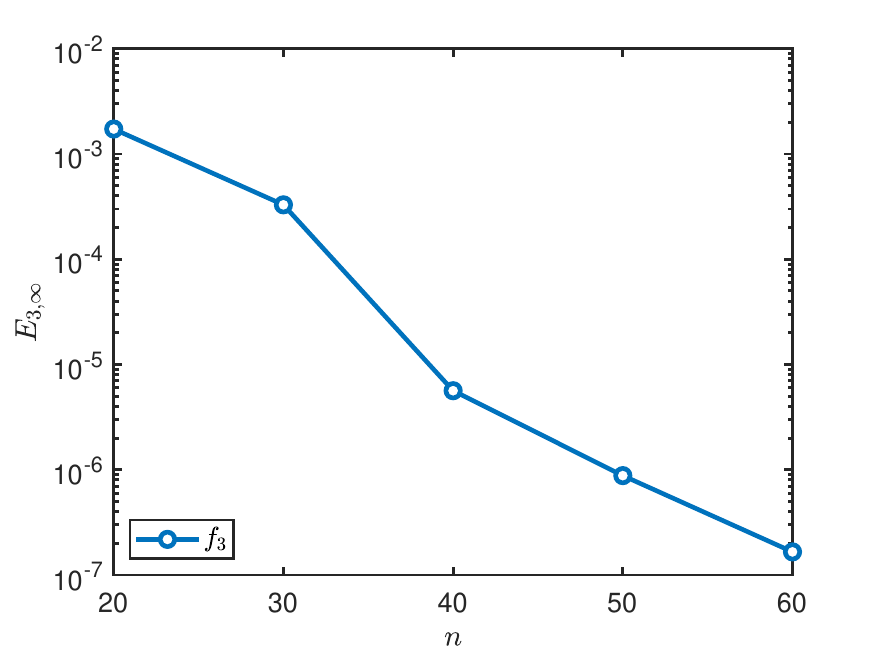} 
\includegraphics[width=0.49\textwidth]{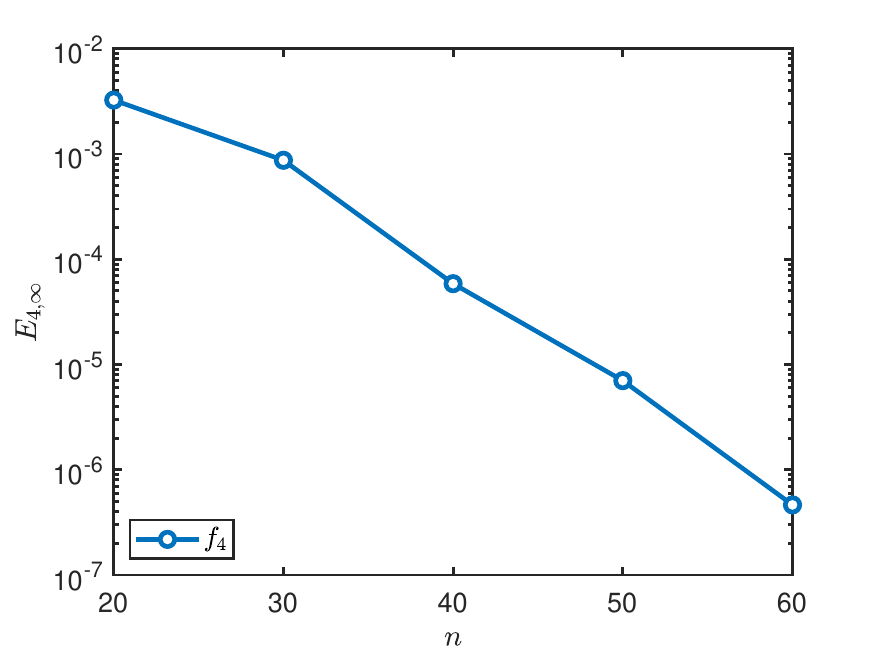} 
\caption{Trend of the maximum error $E_{i,\infty}\left(r;\widetilde{X}_N,X_M\right)$ as $n$ varies, for the test functions $f_1$ (top left), $f_2$ (top right), $f_3$ (bottom left), and $f_4$ (bottom right).}
 \label{fig1}
\end{figure}
From these plots, we can observe that the error decreases as $n$ increases for all four test functions. The decay is particularly regular for $f_1$, while $f_2$ shows a fast initial improvement followed by a slower decrease. The rational test functions $f_3$ and $f_4$ display a more gradual, but still stable, convergence trend.

\subsection{Non-antipodal setting}
We now consider the general setting, where no symmetry is imposed. For each $n$, we set
\[
X_N = X_N^{\mathrm{Xu}},
\]
and we select an interpolation subset $X_M \subset X_N$ of cardinality
\[
M=\dim \Pi_m\left(\mathbb S^2\right)=(m+1)^2,
\]
by means of a discrete approximate Fekete procedure. In all experiments, the degrees $m$ and $r$ are chosen as
\[
m = \left\lfloor \frac{n}{4}\right\rfloor + 1,
\qquad
r = m + \left\lfloor \sqrt{2m}\right\rfloor,
\]
so that 
\[
m < r < n.
\]
In Fig.~\ref{fig3}, we show the sampling sets and the corresponding interpolation subsets for $n=40,50,60$.
\begin{figure}
  \centering
  \includegraphics[width=0.32\textwidth]{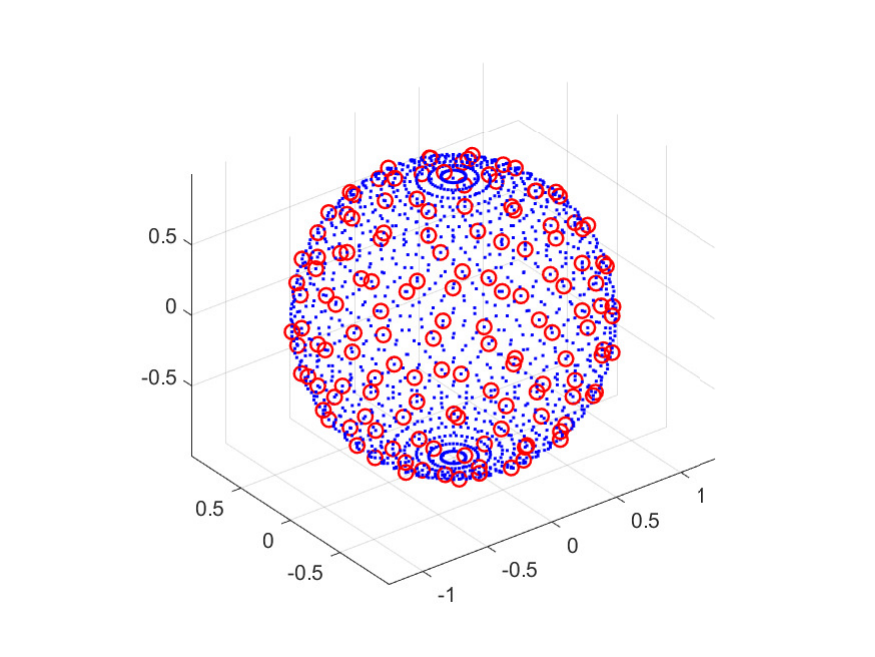}
  \includegraphics[width=0.32\textwidth]{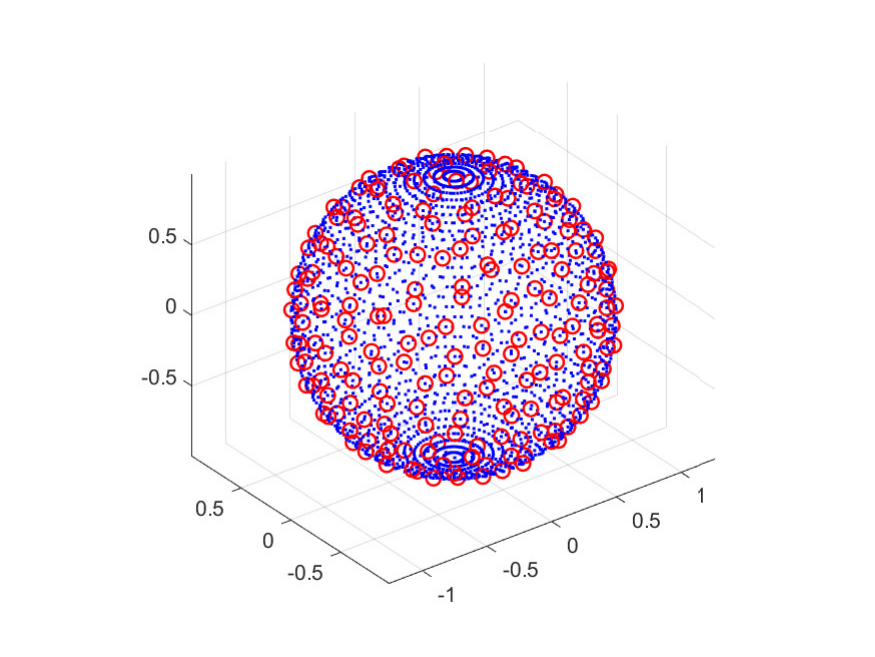}
  \includegraphics[width=0.32\textwidth]{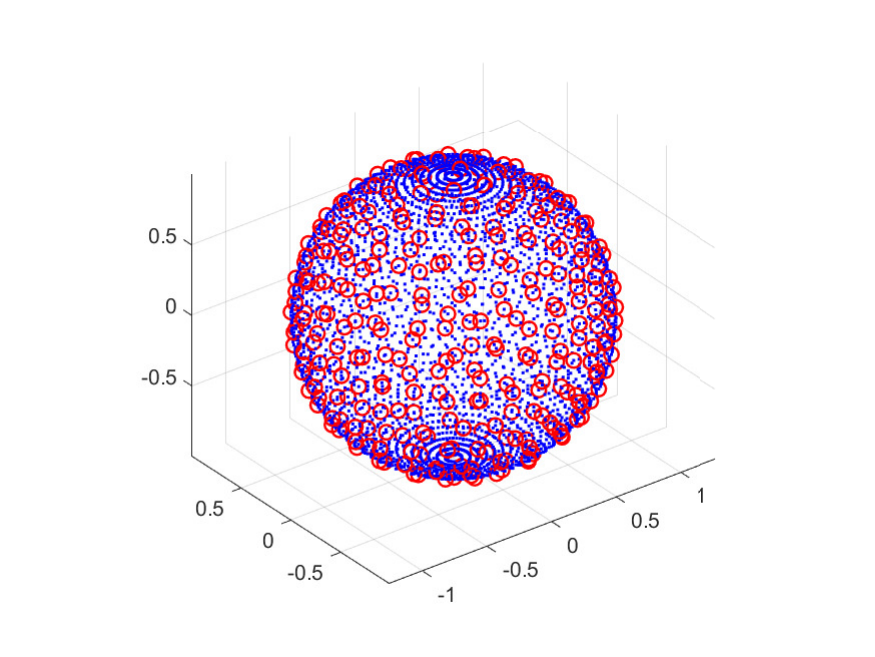}
  \caption{Sampling sets $X_N$ (blue) and selected approximate Fekete interpolation subsets $X_M$ (red) for $n=40$ (left), $n=50$ (centre), and $n=60$ (right). }
  \label{fig3}
\end{figure}

In Fig.~\ref{fig4}, we report the trend of the maximum approximation error $E_{i,\infty}\left(r;X_N,X_M\right)$.
\begin{figure}
  \centering
  \includegraphics[width=0.49\textwidth]{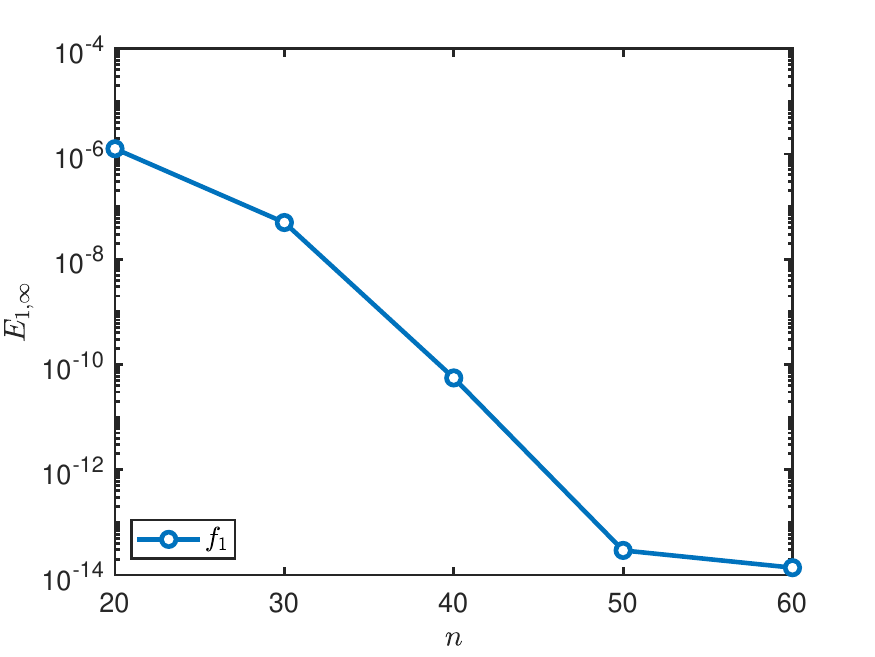}
  \includegraphics[width=0.49\textwidth]{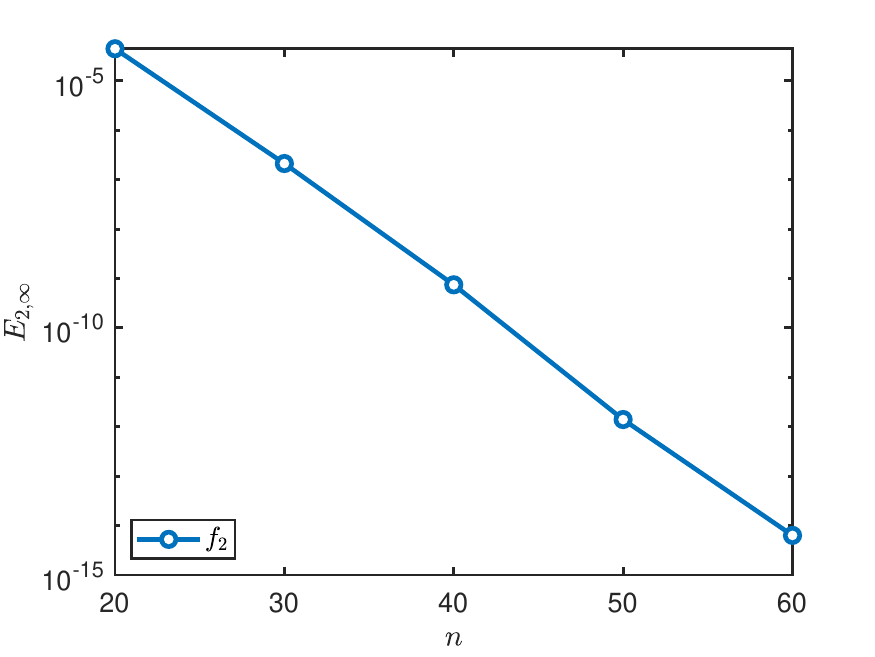}
  \includegraphics[width=0.49\textwidth]{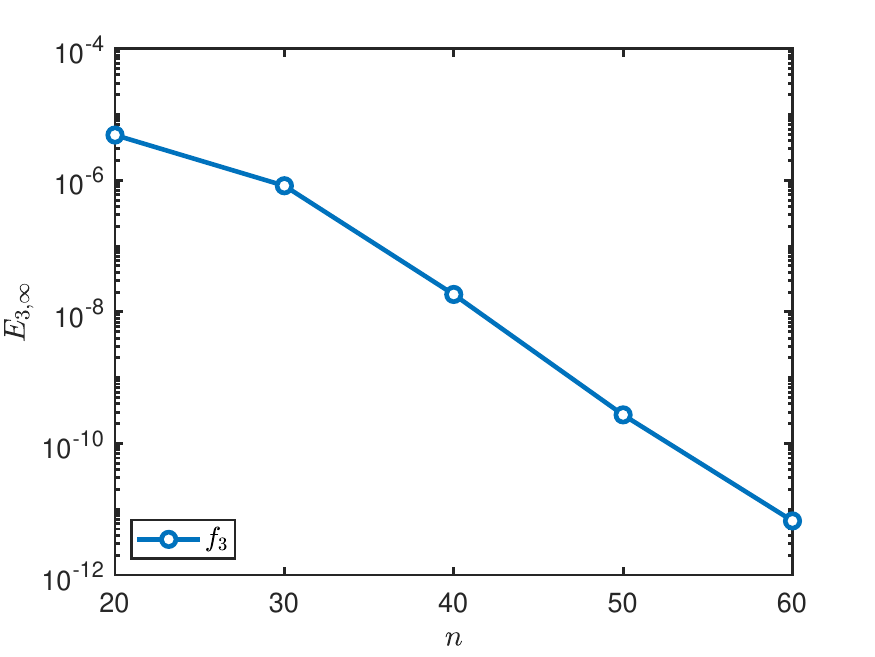}
  \includegraphics[width=0.49\textwidth]{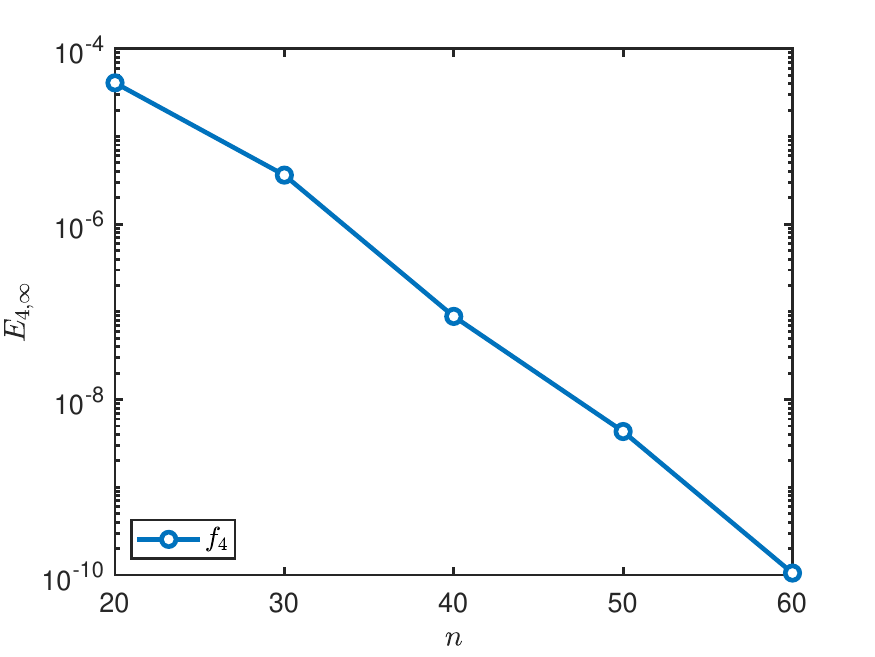}
  \caption{Trend of the maximum error $E_{i,\infty}\left(r;X_N,X_M\right)$ in the general setting for the test functions $f_1$ (top left), $f_2$ (top right), $f_3$ (bottom left), and $f_4$ (bottom right).}
  \label{fig4}
\end{figure}
From these plots, we can observe that the error decreases with $n$ for all the test functions considered, with a trend comparable to that observed in the antipodal case. Here, however, the approximation uses only $(n+1)^2$ sampling points, whereas the antipodal construction relies on $2(n+1)^2$ nodes. In these experiments the errors are smaller than in the antipodal setting. This is likely due to the quality of the approximate Fekete subset used for interpolation.

To further investigate the effect of the reconstruction degree, we consider an additional set of experiments in which 
\[
n=50, \quad m=\left\lfloor \frac{n}{4}\right\rfloor +1,
\]
while the degree $r$ is allowed to vary.  More precisely, we analyze the trend of the maximum error for the test functions $f_1,f_2,f_3,f_4$ as a function of $r$. The results are reported in Fig.~\ref{figtrend}.
\begin{figure}
  \centering
\includegraphics[width=0.49\textwidth]{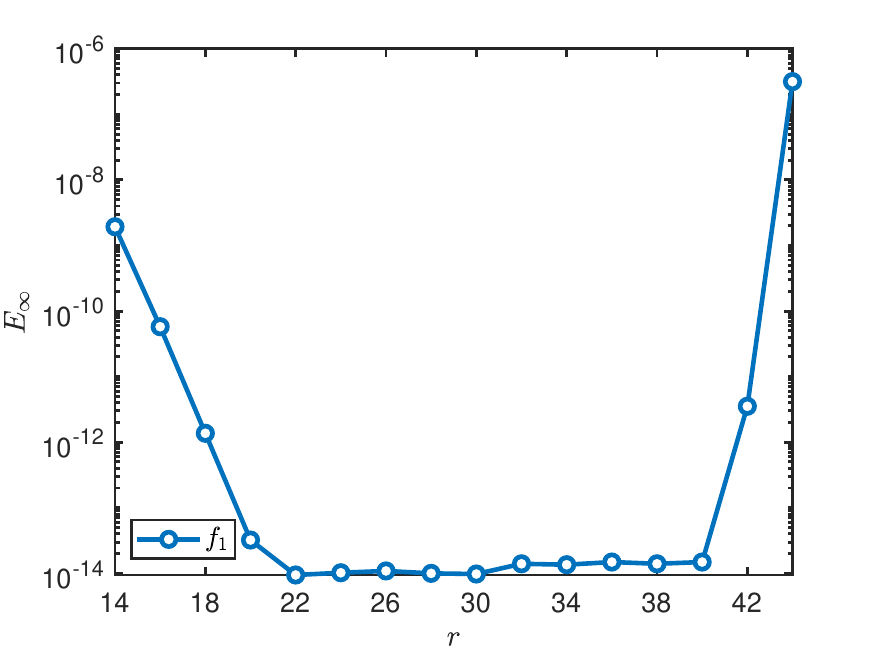} 
\includegraphics[width=0.49\textwidth]{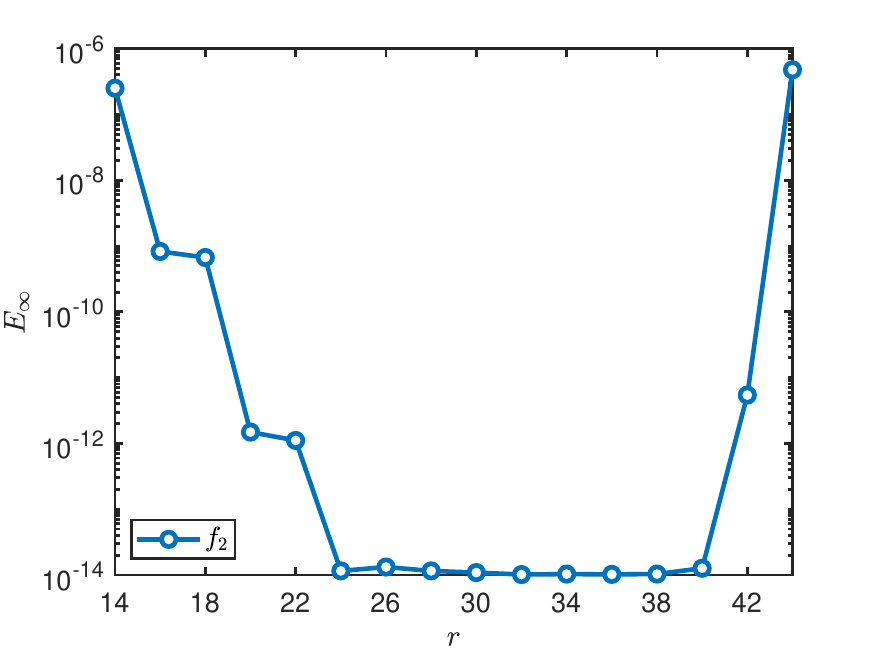} 
\includegraphics[width=0.49\textwidth]{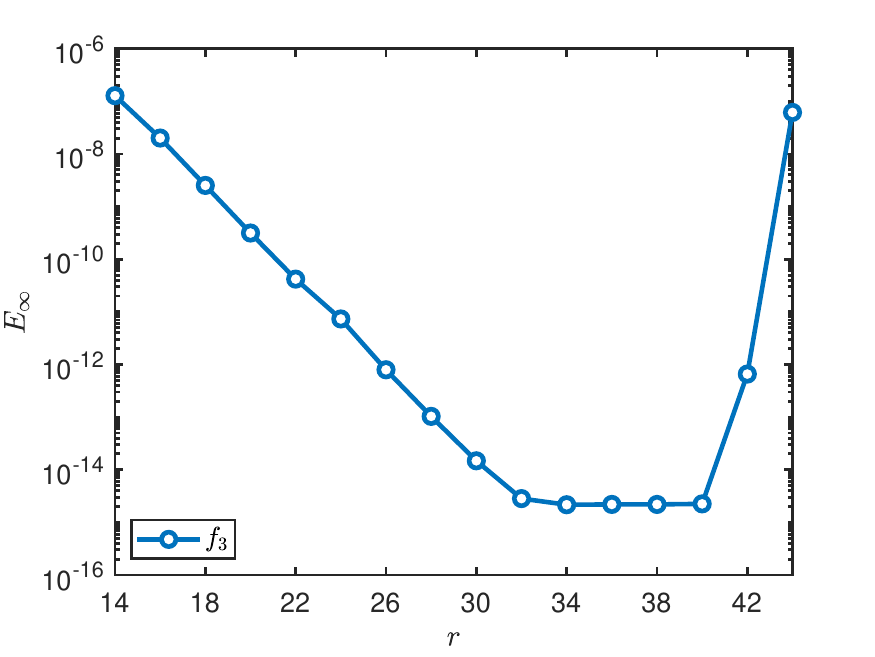} 
\includegraphics[width=0.49\textwidth]{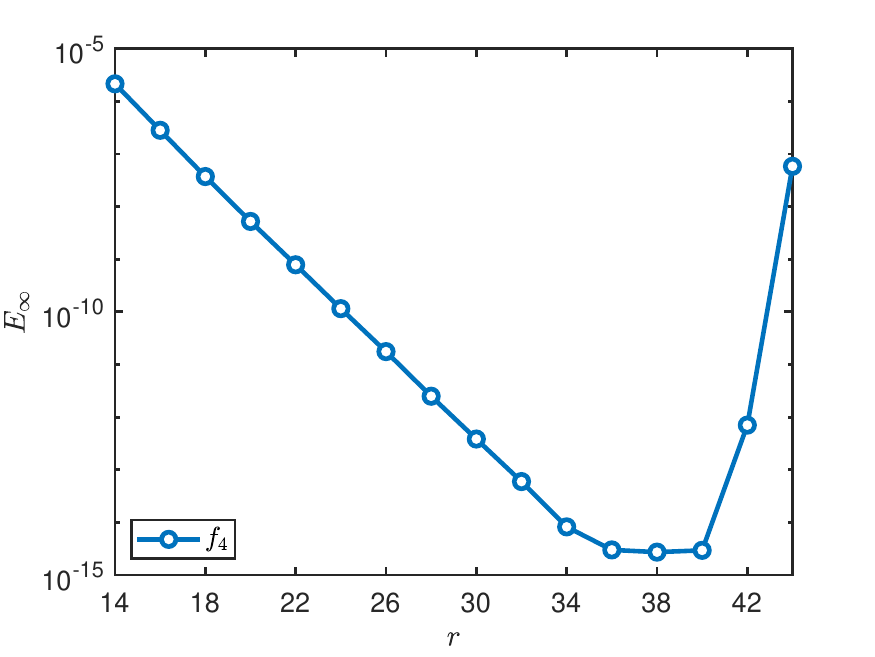} 
\caption{Trend of the maximum error $E_{i,\infty}\left(r;X_N,X_M\right)$ as $r$ varies, for the test functions $f_1$ (top left), $f_2$ (top right), $f_3$ (bottom left), and $f_4$ (bottom right) for $n=50$  and $m=\left\lfloor \frac{n}{4}\right\rfloor +1$.}
 \label{figtrend}
\end{figure}
In this case, we observe that the error decreases as $r$ increases until it reaches machine precision.  Once this value is attained, the error remains essentially constant over a range of degrees, before starting to increase for larger values of $r$. A detailed study of the optimal choice of the reconstruction degree $r$ is left for future work.

Finally, we compare the interpolation--regression approximant with two standard strategies in the same setting. The first one is polynomial interpolation of degree $m$ constructed on the approximate Fekete subset $X_M$, while the second one is the unconstrained least squares approximation of degree $r$ computed over the full sampling set $X_N$. In particular, we consider the configuration
\[
n=50, \qquad 
m=16,
\qquad
r=25.
\]
\begin{table}[h]
\centering
\caption{Comparison of the $L_\infty$ errors for interpolation on $X_M$, interpolation--regression, and global least squares for the test functions in the general setting.}
\begin{tabular}{lccc}
\hline
Function & Interpolation on $X_M$ & Interpolation--regression & Least squares on $X_N$ \\
\hline
$f_1$ & $ 8.627e-11$ & $1.044e-14$ & $5.107e-15$ \\
$f_2$ & $1.335e-09$ & $1.132e-14$ & $3.775e-15$ \\
$f_3$ & $2.849e-08$ & $6.440e-12$ & $3.667e-12$ \\
$f_4$ & $2.272e-07$ & $3.404e-11$ & $3.076e-11$ \\
\hline
\end{tabular}
\label{tab:comparison}
\end{table}
The results in Table~\ref{tab:comparison} show that the interpolation--regression approximant reaches an accuracy very close to that of the unconstrained least squares solution, while still enforcing interpolation on $X_M$. Thus, in this configuration, the interpolation constraints do not significantly affect the quality of the global fit. By contrast, interpolation on $X_M$ alone gives larger errors, mainly because it uses a lower polynomial degree and does not exploit the full sampling set.

\section{Conclusions and Future Work}
In this paper, we introduced a constrained interpolation--regression procedure for polynomial approximation on $\mathbb{S}^2$. The construction combines interpolation on a selected subset of nodes with a least squares approximation on the full sampling set. Under natural rank assumptions, the problem is well posed and the approximant admits an orthogonality characterization. In the antipodally symmetric setting, the parity of real spherical harmonics yields a decomposition into even and odd components and an associated block structure for the discrete operators. In the case of spherical designs, the normal matrix becomes a scalar multiple of the identity, so the spectral analysis simplifies considerably.  We also derived an $L^2\left(\mathbb{S}^2\right)$ quasi-optimality estimate under a discrete norming assumption.

Future work will focus on the following problems. The first is a more precise analysis of the impact of the reconstruction degree $r$ on the stability of the method. The second is the extension of the present interpolation--regression framework to more general Riemannian manifolds.

\section*{Declarations}

\textbf{Conflict of Interest}\\
The authors declare that they have no conflict of interest.\\

\noindent
\textbf{Funding statement}\\
This work was partially supported by the Institute of Mathematics of the University of Granada (IMAG), through its Support Program for Research Activities 2026. 
This research was supported by the GNCS-INdAM 2026 project 
\emph{``Metodi polinomiali e kernel per l'approssimazione da dati discreti e integrali con software OS''} 
(CUP E53C25002010001). 
The work of F. Nudo was funded by the European Union -- NextGenerationEU under the Italian National Recovery and Resilience Plan (PNRR), Mission 4, Component 2, Investment 1.2 
\lq\lq Finanziamento di progetti presentati da giovani ricercatori\rq\rq, 
pursuant to MUR Decree No.~47/2025. The work of T. E. Pérez and M. A. Piñar was funded by the grants PID2023.149117NB.I00 funded by MICIU/ AEI/
10.13039/501100011033 and ERDF A way of making
Europe.\\

\noindent
\textbf{Author Contributions}\\
All authors contributed equally to this work; therefore, the order of the authors is alphabetical. \\

\noindent
\textbf{Acknowledgement}\\
This research was carried out as part of RITA \textquotedblleft Research ITalian network on Approximation'' and as part of the UMI group \enquote{Teoria dell'Approssimazione e Applicazioni}.\\

\noindent
\textbf{Data Availability}\\
No data were used in this study.

\bibliographystyle{spmpsci}
\bibliography{bibliography}

\end{document}